\theoremstyle{thmstyleone}%
\newtheorem{theorem}{Theorem}
\theoremstyle{thmstyletwo}%
\newtheorem{lemma}{Lemma}
\theoremstyle{thmstylethree}%
\newcommand{\gf}{ {{\mathbb F}} }
\begin{document}

\title[Permutation polynomials and their compositional inverses]{The compositional inverses of three classes of permutation polynomials over finite fields}


\author*[1]{\fnm{} \sur{Danyao Wu}}\email{wudanyao111@163.com}

\author[2]{\fnm{} \sur{Pingzhi Yuan}}\email{yuanpz@scnu.edu.cn}


\affil*[1]{\orgdiv{School of Computer Science and Technology}, \orgname{Dongguan University of Technology}, \orgaddress{
		\city{Dongguan}, \postcode{523808}, 
		\country{China}}}

\affil[2]{\orgdiv{School of Mathematics}, \orgname{South China Normal University}, \orgaddress{
		\city{Guangzhou}, \postcode{510631},
		\country{China}}}


\abstract{Recently, P. Yuan presented a local method to find permutation polynomials and their compositional inverses over finite fields. The work of P. Yuan inspires us to compute the compositional inverses of three classes of the permutation polynomials: (a) the permutation polynomials of the form $ax^q+bx+(x^q-x)^k$ over $\gf_{q^2},$ where $a+b \in \gf_q^*$ or $a^q=b;$ (b) the permutation polynomials of the forms $f(x)=-x+x^{(q^2+1)/2}+x^{(q^3+q)/2} $ and $f(x)+x$ over $\gf_{q^3};$ (c) the permutation polynomial of the form	$A^{m}(x)+L(x)$
	 over $\gf_{q^n},$
	where ${\rm Im}(A(x))$ is a vector space with dimension $1$ over $\gf_{q}$ and $L(x)$ is not a linearized permutation polynomial.}

\keywords{finite fields, polynomials, permutation polynomials, compositional inverses, local method}


\pacs[MSC Classification]{11T06; 12E10}

\maketitle

\section{Introduction}

Let  $\gf_q$ be the finite field with $q$ elements and $\gf_{q}^*$ denote the
multiplicative group with the nonzero element in $\gf_q$, where $q$ is a prime power. Let $\gf_q[x]$
be the ring of polynomials in a single indeterminate $x$ over $\gf_q$. A polynomial
$f \in\gf_q[x]$ is called a {\em permutation polynomial} (PP) of $\gf_q$ if its
associated polynomial mapping $f: c\mapsto f(c)$ from $\gf_q$ to itself is  bijective. A polynomial $f(x)\in \gf_q[x]$ is called a complete permutation polynomial (CPP) if both $f(x)$ and $f(x)+x$ are permutations of $\gf_q.$  The unique polynomial denoted by $f^{-1}(x)$ over $\gf_q$
such that $f(f^{-1}(x))\equiv f^{-1}(f(x)) \equiv x \pmod{x^q-x}$ is called the compositional inverse of $f(x).$ Furthermore,  $f(x)$ is called  an involution when $f^{-1}(x)=f(x).$

The study of permutation polynomials and their compositional inverses over finite fields
in terms of their coefficients is a classical and difficult subject which
attracts people's interest partially due to their wide applications in coding theory
\cite{ding2013cyclic,ding2014binary,laigle2007permutation},
cryptography \cite{rivest1978method,schwenk1998public}, combinatorial design theory \cite{ding2006family}, and other areas of mathematics and engineering \cite{lidl1997finite,lidl1994introduction}. For instance, in block ciphers,
a permutation polynomial is usually used as an S-box to build the confusion layer and the compositional inverse of S-box comes into picture while decrypting the cipher. Both the permutation polynomial and its compositional inverse are implemented. Therefore, the explicit and efficient permutation polynomial and its compositional inverse are desired for designers. Indeed,
the better understanding of permutation polynomials and their compositional inverses in explicit format is not only meaningful, but also important for these applications.

In general, it is difficult to discover new classes of permutation polynomials. In 2011,  Akbrary, Ghioca and Wang \cite{akbary2011constructing} proposed a powerful method
called the AGW criterion for constructing permutation polynomials--that is, construct mappings $\bar{\lambda}(x)$, $\lambda(x),$ $h(x)$ that satisfy certain condition such that  $\overline{\lambda}(x)\circ f(x)=h(x)\circ \lambda(x),$ to discuss whether $f(x)$ is a permutation polynomial. Many classes of permutation polynomials have been constructed up  , see
\cite{akbary2011constructing,cepak2017permutations,li2017new,tu2018two,wu2022some,wu2022further,yuan2011permutation,yuan2015permutation,zheng2016large,zheng2019two}.
Computing the coefficients of
the compositional inverse of a permutation polynomial seems to be even more difficult, except for several classical classes such as
monomials, linearized polynomials, Dickson polynomials, which have nice structure. 
Wang \cite{wang2024surveyDCC}  surveyed on the  results and methods in the study of compositional
inverses of permutation polynomials over finite fields. More recent results are included in \cite{wu2024compositionalinversesclassespermutation,wu2024compositionalinversespermutationpolynomials,2wu2024compositionalinversespermutationpolynomials,wu2024permutation,wu2024permutationarXivlocal,yuan2022compositional}  for more details.

 The coauthor  gave  the dual diagram of the AGW criterion \cite{yuan2022compositional} and  a local method \cite{yuan2022local} to find permutation polynomials and their  compositional inverses, which the latter result  proposed the framework to obtain the permutation polynomials and their compositional inverses.  This inspires us to merge these two conclusions to study the compositional inverses of permutation polynomials, as established by the AGW criterion. In this paper, we will examine the compositional inverses of permutation polynomials of the form $ax^q+bx+(x^q-x)^k$ over $\gf_{q^2}$ as discussed in \cite{yuan2011permutation}.
 
Furthermore, the authors \cite{wu2024compositionalinversesclassespermutation} considered three classes of permutation trinomials  over $\gf_{q^3}$ using the local method. Using the techniques outlined in \cite{wu2024compositionalinversesclassespermutation}, we will investigate the compositional inverse of permutation polynomials of the form  $f(x)=-x+x^{(q^2+1)/2}+x^{(q^3+q)/2}$ over $\gf_{q^3}.$
 
  Based on the local method,  the authors \cite{wu2024permutation} examined a class of permutation polynomials of the form \begin{equation}\label{eqA(x)}
 	A_1^{m_1}(x)+\sum_{i=2}^nu_iA_i^{m_i}(x)
 	\end{equation}
 	 and their compositional inverses over $\gf_{q^n}$, where ${\rm Im}(A_i(x))$ is a vector space with dimension $1$ over $\gf_{q}$ and $u_i \in \gf_{q^n}$, $m_i$ are a positive integers for $i=1, 2, \cdots, n.$   	 
Hasan and Kaur \cite{hasan2024newclassespermutationpolynomials} 
considered six specific forms  given by  \eqref{eqA(x)} over $\gf_{q^3},$ setting $m_1=m_2=u_2=u_3=1$ and $A_3(x)=x+x^q+x^{q^2}.$ In this case,  we have $\sharp {\rm Im} (A_1(x)+A_2(x))\leq \sharp {\rm Im}(A_1(x))\cdot \sharp {\rm Im}(A_2(x))=q^2 $, indicating that   $A_1(x)+A_2(x)$ does not permute $\gf_{q^3}.$  In this paper, we refine 
the result of Hasan and Kaur \cite{hasan2024newclassespermutationpolynomials} by considering a class of permutation polynomials of the form 
\begin{equation*}\label{eqAL(x)}
	A^{m}(x)+L(x),
\end{equation*}
 and its compositional inverse over $\gf_{q^n},$
where ${\rm Im}(A(x))$ is a vector space with dimension $1$ over $\gf_{q}$ and $L(x)$ is not a linearized permutation polynomial.

The remainder of this paper is organized as follows. In Section 2, some basic concepts are introduced and  the local method is presented which is used in the sequels. In Section 3, we  the compositional inverses of permutation polynomials of the form $ax^q+bx+(x^q-x)^k$ over $\gf_{q^2}.$ In Section 4,  we investigate the compositional inverses of the permutation polynomials of the forms $f(x)=-x+x^{(q^2+1)/2}+x^{(q^3+q)/2} $ and $f(x)+x$ over $\gf_{q^3}.$ In Section 5, we study the permutation behavior of the form  	$A^{m}(x)+L(x)$
and its compositional inverse over $\gf_{q^n},$
where $Im(A(x))$ is a vector space with dimension $1$ over $\gf_{q}$ and $L(x)$ is not a linearized permutation polynomial.

\section{Preliminaries} 
%
%

The following lemma was developed by Akbary, Ghioca and Wang \cite{akbary2011constructing}. 
\begin{lemma}\label{leagw}\cite[AGW criterion]{akbary2011constructing}
	Let $A, S$ and $\overline{S}$ be finite sets
	with $\sharp S =\sharp \overline{S}$,
	and let $f(x) : A\longrightarrow A$, $h(x): S\longrightarrow \overline{S}$, $\lambda(x): A\longrightarrow S,$ and $\overline{\lambda}(x):A\longrightarrow \overline{S}$ be maps
	such that $\overline{\lambda}(x)\circ f(x)=h(x)\circ \lambda(x).$
	If both $\lambda(x)$ and $\overline{\lambda}(x)$ are surjective,
	then the following statements are equivalent:\\
	(i) $f(x)$ is bijective (a permutation of $A$); and\\
	(ii) $h(x)$ is bijective from $S$ to $\overline{S}$ and
	$f(x)$ is injective on $\lambda^{-1}(s)$ for each $s \in S.$
\end{lemma}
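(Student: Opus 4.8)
The plan is to read Lemma~\ref{leagw} as a diagram-chasing statement about the commuting square $\overline\lambda\circ f = h\circ\lambda$ and to establish the two implications $(i)\Rightarrow(ii)$ and $(ii)\Rightarrow(i)$ separately. The organizing observation I would record first is that the commuting relation forces $f$ to respect fibers: if $\lambda(x)=s$, then $\overline\lambda(f(x)) = h(\lambda(x)) = h(s)$, so $f$ maps the fiber $\lambda^{-1}(s)$ into the fiber $\overline\lambda^{-1}(h(s))$. Since $S,\overline S$ are finite with $\sharp S = \sharp\overline S$, I would also use repeatedly the elementary fact that a self-map of a finite set, or a map between two finite sets of equal size, is bijective as soon as it is injective or surjective.

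For $(i)\Rightarrow(ii)$, assume $f$ is a permutation of $A$. Injectivity of $f$ on each fiber $\lambda^{-1}(s)$ is immediate, since $f$ is injective on all of $A$. To see that $h$ is bijective I would show it is surjective and then invoke $\sharp S = \sharp\overline S$: given $\bar s\in\overline S$, surjectivity of $\overline\lambda$ yields $y\in A$ with $\overline\lambda(y)=\bar s$, and surjectivity of $f$ gives $x$ with $f(x)=y$; then $\bar s = \overline\lambda(f(x)) = h(\lambda(x))$ lies in the image of $h$. Hence $h$ is onto, and equality of the finite cardinalities upgrades this to a bijection.

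For $(ii)\Rightarrow(i)$, assume $h$ is bijective and $f$ is injective on each fiber. Because $A$ is finite it suffices to prove $f$ is injective. Suppose $f(x_1)=f(x_2)$; applying $\overline\lambda$ and using the square gives $h(\lambda(x_1)) = h(\lambda(x_2))$, so injectivity of $h$ forces $\lambda(x_1)=\lambda(x_2)=:s$. Thus $x_1,x_2$ lie in the common fiber $\lambda^{-1}(s)$, on which $f$ is injective, whence $x_1=x_2$. Therefore $f$ is injective, hence bijective.

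The individual steps are short, so the main obstacle is bookkeeping rather than computation: invoking each hypothesis exactly where it is needed. The delicate points I anticipate are (a) that promoting \emph{surjective} to \emph{bijective} genuinely relies on finiteness together with $\sharp S = \sharp\overline S$, and (b) that the surjectivity of $\overline\lambda$ is precisely what lets me lift an arbitrary $\bar s\in\overline S$ back through $f$ to place it in the image of $h$. As a cleaner alternative for the direction $(ii)\Rightarrow(i)$ I would keep in reserve a counting argument: under $(ii)$ each restricted map $f\colon\lambda^{-1}(s)\to\overline\lambda^{-1}(h(s))$ is injective, so $\sharp\lambda^{-1}(s)\le\sharp\overline\lambda^{-1}(h(s))$; summing over $s\in S$, reindexing the right-hand side by the bijection $h$, and comparing with $\sum_{s}\sharp\lambda^{-1}(s)=\sharp A=\sum_{\bar s}\sharp\overline\lambda^{-1}(\bar s)$ forces equality in every fiber, so each restricted map is a bijection and hence so is $f$.
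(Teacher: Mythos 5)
Your proof is correct. Note that the paper itself gives no proof of this statement: it is quoted as Lemma~1 directly from the cited reference of Akbary, Ghioca and Wang, so there is no in-paper argument to compare against. Your two implications are exactly the standard argument for the AGW criterion: for $(i)\Rightarrow(ii)$ you correctly lift an arbitrary $\bar s\in\overline S$ through the surjection $\overline\lambda$ and the permutation $f$ to get surjectivity of $h$, and then use $\sharp S=\sharp\overline S$ with finiteness to upgrade to bijectivity; for $(ii)\Rightarrow(i)$ the chase $f(x_1)=f(x_2)\Rightarrow h(\lambda(x_1))=h(\lambda(x_2))\Rightarrow\lambda(x_1)=\lambda(x_2)\Rightarrow x_1=x_2$ is complete, and injectivity of a self-map of the finite set $A$ gives bijectivity. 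Your reserve counting argument (summing $\sharp\lambda^{-1}(s)\le\sharp\overline\lambda^{-1}(h(s))$ over $s$ and forcing fiberwise equality) is in fact the fiber-counting style of proof used in the original AGW paper, and it has the small added value of showing directly that each restricted map $f\colon\lambda^{-1}(s)\to\overline\lambda^{-1}(h(s))$ is a bijection, not merely that $f$ is. One minor observation: your argument never uses the surjectivity of $\lambda$; that hypothesis is harmless but idle in this finite-set formulation, which is worth a remark but is not a gap.
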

$$	\xymatrix{
	A  \ar[r]^{f(x)} \ar[d]_{\lambda(x)} & A  \ar[d]^{\bar{\lambda}(x)}\\
	S \ar[r]_{h(x)} &  \bar{S}
}
$$

We give the dual diagram of the AGW criterion.

\begin{lemma}\cite[Theorem 2.6]{yuan2022compositional}\label{leagwf-}
	Let the notations be defined as in Lemma \ref{leagw}. If $f(x) : A\longrightarrow A$ is a bijection, $f^{-1}(x)$ and $h^{-1}(x)$ are the compositional inverses of $f(x)$ and $h(x),$ respectively, then we have $$\lambda(x) \circ f^{-1}(x)=h^{-1}(x)\circ \bar{\lambda}(x),$$
	i.e., the following diagram commutes
	$$	\xymatrix{
		A  \ar[r]^{f^{-1}(x)} \ar[d]_{\bar{\lambda}(x)} & A  \ar[d]^{\lambda(x)}\\
		\bar{S} \ar[r]_{h^{-1}(x)} &  S
	}
	$$
	
\end{lemma}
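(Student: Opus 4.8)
The plan is to obtain the asserted identity directly from the defining relation $\overline{\lambda} \circ f = h \circ \lambda$ of Lemma \ref{leagw} by composing with the appropriate inverse maps, so that the only genuine content lies in guaranteeing that these inverses exist and that the domains and codomains of the resulting composites match up.

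First I would establish that $h$ is a bijection from $S$ onto $\overline{S}$. Since $f$ is assumed to be a bijection of $A$ and both $\lambda$ and $\overline{\lambda}$ are surjective with $\sharp S = \sharp \overline{S}$, statement (i) of Lemma \ref{leagw} holds, and by the equivalence in that lemma statement (ii) holds as well; in particular $h$ is bijective. This is exactly what is needed for the compositional inverse $h^{-1} : \overline{S} \longrightarrow S$ to be well defined, in agreement with the hypothesis that $h^{-1}(x)$ exists, while $f^{-1} : A \longrightarrow A$ exists by assumption.

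Next I would carry out the algebraic manipulation. Starting from $\overline{\lambda} \circ f = h \circ \lambda$ and composing on the right with $f^{-1}$, the left-hand side collapses via $\overline{\lambda} \circ f \circ f^{-1} = \overline{\lambda}$, giving $\overline{\lambda} = h \circ \lambda \circ f^{-1}$. Composing this on the left with $h^{-1}$ and using $h^{-1} \circ h = \mathrm{id}_S$ then yields $h^{-1} \circ \overline{\lambda} = \lambda \circ f^{-1}$, which is precisely the claimed identity $\lambda \circ f^{-1} = h^{-1} \circ \overline{\lambda}$. A quick check confirms consistency of the maps: both $h^{-1} \circ \overline{\lambda}$ and $\lambda \circ f^{-1}$ send $A$ into $S$, so the dual diagram in the statement commutes.

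The computation itself is just associativity of composition together with the cancellations $f \circ f^{-1} = \mathrm{id}_A$ and $h^{-1} \circ h = \mathrm{id}_S$, so it presents no real difficulty. The only step requiring care, and hence the main (though mild) obstacle, is the appeal to Lemma \ref{leagw} to certify that $h$ is bijective; without this the symbol $h^{-1}$ would be meaningless. I would therefore emphasize that the surjectivity of $\lambda$ and $\overline{\lambda}$ and the equality $\sharp S = \sharp \overline{S}$ enter precisely at this point.
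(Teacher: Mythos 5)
Your argument is correct: composing $\overline{\lambda}\circ f = h\circ\lambda$ on the right with $f^{-1}$ and on the left with $h^{-1}$, after using the AGW criterion (Lemma \ref{leagw}) to guarantee that $h$ is bijective so that $h^{-1}$ exists, is precisely the standard proof of this dual-diagram statement, and it is the argument behind the cited result \cite[Theorem 2.6]{yuan2022compositional}, which this paper quotes without reproducing a proof. Nothing is missing; your care in justifying the bijectivity of $h$ before writing $h^{-1}$ is exactly the one point that needs attention.
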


Yuan \cite{yuan2022local} presented a local method to find the permutation polynomials and their  compositional inverses  over finite fields. We will use the local method frequently.
\begin{lemma}\label{leff-}\cite[Theorem 2.2]{yuan2022local}
	Let $q$ be a prime power and $f(x)$ be a polynomial over $\gf_q.$ Then
	$f(x)$ is a permutation polynomial over $\gf_q$ if and only if there exist nonempty finite
	subsets $S_i$, $i=1, 2, \cdots, t$ of $\gf_q$ and maps $\psi_i(x): \gf_q \rightarrow S_i$, $i=1, 2, \cdots, t$ such that $\psi_i(x)\circ f(x)=\varphi_i(x),$ $i=1, 2, \cdots, t$
	and $x=F(\varphi_1(x), \varphi_2(x), \cdots, \varphi_t(x)),$ where $F(x_1, x_2, \cdots, x_t)\in \gf_q[x_1, x_2, \cdots, x_t].$ Moreover, the compositional inverse of $f(x)$ is given by
	$$f^{-1}(x)=F(\psi_1(x), \psi_2(x), \cdots, \psi_t(x)).$$
\end{lemma}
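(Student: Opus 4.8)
The plan is to prove the two directions of the equivalence separately, with the ``if'' direction carrying essentially all the content and simultaneously yielding the inverse formula. The governing principle throughout is that a self-map of the finite set $\gf_q$ is a bijection as soon as it admits a one-sided inverse, so the strategy is to manufacture such a one-sided inverse directly out of the local data $\psi_i$ and the polynomial $F$.

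For the ``only if'' direction, suppose $f(x)$ is a permutation polynomial. Here I would simply exhibit the trivial local decomposition: take $t=1$, $S_1=\gf_q$, and let $\psi_1(x)=f^{-1}(x)$ be the already-existing compositional inverse. Then $\varphi_1(x)=\psi_1(f(x))=f^{-1}(f(x))=x$ on $\gf_q$, and choosing $F(x_1)=x_1$ gives $x=F(\varphi_1(x))$. This shows the required subsets, maps, and polynomial exist, so this direction is immediate once the definition of $f^{-1}$ is invoked; it is the mere existence assertion, not the useful content of the lemma.

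The substance lies in the ``if'' direction. Assume the subsets $S_i$, the maps $\psi_i$, the functions $\varphi_i=\psi_i\circ f$, and the polynomial $F$ are given with $x=F(\varphi_1(x),\cdots,\varphi_t(x))$ for all $x\in\gf_q$. First I would set $g(x):=F(\psi_1(x),\cdots,\psi_t(x))$, which is a well-defined self-map of $\gf_q$ precisely because each $\psi_i(x)\in S_i\subseteq\gf_q$, so the arguments fed into $F$ lie in $\gf_q$. The key computation is to compose $g$ with $f$ on the right, substituting $\psi_i\circ f=\varphi_i$ coordinatewise inside $F$:
$$g(f(x))=F\bigl(\psi_1(f(x)),\cdots,\psi_t(f(x))\bigr)=F\bigl(\varphi_1(x),\cdots,\varphi_t(x)\bigr)=x.$$
Thus $g\circ f=\mathrm{id}$ on $\gf_q$, which forces $f$ to be injective; since $\gf_q$ is finite, $f$ is then a bijection, i.e. a permutation polynomial.

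Finally, to identify $g$ with the compositional inverse, I would exploit that $f$ is now known to be bijective: from $g\circ f=\mathrm{id}$ one obtains $g=g\circ(f\circ f^{-1})=(g\circ f)\circ f^{-1}=f^{-1}$ as maps on $\gf_q$, so that $f^{-1}(x)=F(\psi_1(x),\cdots,\psi_t(x))$ after reduction modulo $x^q-x$. I do not anticipate a genuine obstacle in this argument; the one point demanding care is the passage from a one-sided inverse to a two-sided inverse, which is exactly where the finiteness of $\gf_q$ enters, together with the observation that $F$ assembles $g$ from the local pieces $\psi_i$ in a manner compatible with post-composition by $f$. The only thing to keep honest is that all identities are read as equalities of functions on $\gf_q$ (equivalently, congruences modulo $x^q-x$), so that $g$ is legitimately the compositional inverse in the sense fixed by the definition.
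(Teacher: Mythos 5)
Your proposal is correct and complete: the ``if'' direction manufactures a left inverse $g=F(\psi_1,\cdots,\psi_t)$ from the local data, finiteness of $\gf_q$ upgrades injectivity to bijectivity, and the trivial ``only if'' direction via $t=1$, $\psi_1=f^{-1}$, $F(x_1)=x_1$ is exactly what is needed. The paper itself states this lemma only as a citation to Yuan's local-method paper without reproducing a proof, and your argument is the standard one given there, so there is nothing to flag.
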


Next, we will give a result about linearized permutation polynomial over $\gf_{q^n}.$
It is well-known that $L(x)=\sum_{i=0}^{n-1}a_ix^{q^i}\in \gf_{q^n}[x]$ is a permutation polynomial if and only if the associated Dickson matrix 

\begin{equation*}
	D=\left(
	\begin{array}{cccc}
		a_0&a_1&\cdots&a_{n-1}\\
		a_{n-1}^{q}&	a_{0}^{q}&\cdots&	a_{n-2}^{q}\\
		\vdots&\vdots&\vdots&\vdots\\
		a_{1}^{q^{n-1}}& 	a_{2}^{q^{n-1}}&\cdots& 	a_{0}^{q^{n-1}}\\
	\end{array}
	\right)
\end{equation*}
is non-singular.

 Wu and Liu  \cite{wu2013linearized} obtained the compositional inverse of a linearized permutation polynomial over $\gf_{q^n}.$ Recently, the authors  also obtained such result in \cite{wu2024permutationarXivlocal}  by the local method.
\begin{lemma}\cite{wu2013linearized, wu2024permutationarXivlocal}\label{linearinverse}
	Let $	L(x)=\sum_{i=0}^{n-1}a_ix^{q^i} \in \mathscr{D}_n(\gf_{q^n})$ be a linearized permutation polynomial and $D_L$ be its associated Dickson matrix. Then 
	$$L^{-1}(x)=\big(det(D_L)\big)^{-1}\sum_{i=0}^{n-1}\bar{a}_ix^{q^i},$$
	where $\bar{a}_i$ is the $(i ,0)-$th cofactor of $D_L$, and the determinant of  $D_L$ is $det(D_L)=a_0\bar{a}_0+\sum_{i=1}^{n-1}a_{n-i}^{q^i}\bar{a}_i.$ 
\end{lemma}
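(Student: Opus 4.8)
The plan is to apply the local method (Lemma~\ref{leff-}) over the base field $\gf_{q^n}$, taking the $n$ Frobenius maps as the auxiliary functions. Concretely, I would set $t=n$, $S_i=\gf_{q^n}$, and $\psi_i(x)=x^{q^{i}}$ for $i=0,1,\dots,n-1$. Writing $y=L(x)$, the first step is to raise the defining relation $y=\sum_{l=0}^{n-1}a_lx^{q^l}$ to successive Frobenius powers; using $x^{q^{n}}=x$ for $x\in\gf_{q^n}$, this produces the $n$ identities
\begin{equation*}
\psi_i(L(x))=y^{q^{i}}=\sum_{j=0}^{n-1}a_{(j-i)\bmod n}^{\,q^{i}}\,x^{q^{j}}=:\varphi_i(x),\qquad i=0,1,\dots,n-1.
\end{equation*}
The key observation is that the coefficient matrix of this linear system in the unknowns $x,x^{q},\dots,x^{q^{n-1}}$ is exactly the Dickson matrix $D_L$, since its $(i,j)$-entry equals $a_{(j-i)\bmod n}^{q^{i}}$. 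Thus, setting $\mathbf x=(x,x^{q},\dots,x^{q^{n-1}})^{\mathsf T}$ and $\mathbf y=(y,y^{q},\dots,y^{q^{n-1}})^{\mathsf T}$, the system reads $\mathbf y=D_L\mathbf x$.

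Since $L(x)$ is a linearized permutation polynomial, $D_L$ is nonsingular by the criterion recalled above, so the system can be inverted. The second step is to solve for $x$ by Cramer's rule, that is, to read off the top (zeroth) row of $D_L^{-1}=(\det D_L)^{-1}\operatorname{adj}(D_L)$:
\begin{equation*}
x=\sum_{j=0}^{n-1}\big(D_L^{-1}\big)_{0,j}\,y^{q^{j}}=(\det D_L)^{-1}\sum_{j=0}^{n-1}\bar a_j\,y^{q^{j}},
\end{equation*}
where I use that the $(0,j)$-entry of the adjugate is the $(j,0)$-th cofactor $\bar a_j$. This is precisely the identity $x=F(\varphi_0(x),\dots,\varphi_{n-1}(x))$ demanded by Lemma~\ref{leff-}, with the linear form $F(z_0,\dots,z_{n-1})=(\det D_L)^{-1}\sum_{j}\bar a_j z_j\in\gf_{q^n}[z_0,\dots,z_{n-1}]$. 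The local method then delivers $L^{-1}(x)=F(\psi_0(x),\dots,\psi_{n-1}(x))=(\det D_L)^{-1}\sum_{j=0}^{n-1}\bar a_j x^{q^{j}}$, which is the asserted formula.

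For the determinant, the final step is a Laplace expansion of $\det D_L$ along its first column. Since the $(i,0)$-entry of $D_L$ equals $a_0$ for $i=0$ and $a_{n-i}^{q^{i}}$ for $i\geq 1$, the expansion gives $\det D_L=\sum_{i=0}^{n-1}(D_L)_{i,0}\bar a_i=a_0\bar a_0+\sum_{i=1}^{n-1}a_{n-i}^{q^{i}}\bar a_i$, as claimed.

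I expect the main obstacle to be bookkeeping rather than conceptual. The hard part will be tracking the cyclic index $(j-i)\bmod n$ so that the Frobenius powers of $L$ reproduce exactly the rows of $D_L$, and matching the $(0,j)$-entry of $D_L^{-1}$ to the correct $(j,0)$-cofactor; the transpose implicit in the adjugate is what makes the coefficients emerge as $\bar a_j$ rather than a permuted variant. Once these indexing conventions are fixed consistently, the only structural input required is $\det D_L\neq 0$, which is exactly the permutation hypothesis, so the inversion is justified and the formula follows.
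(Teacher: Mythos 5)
Your proposal is correct: the system $\mathbf y=D_L\mathbf x$ obtained by applying Frobenius to $y=L(x)$, the inversion via the adjugate (which is why the $(j,0)$-cofactors $\bar a_j$ appear in row $0$ of $D_L^{-1}$), and the cofactor expansion of $\det D_L$ along the first column are all sound. The paper itself offers no proof of this lemma --- it is quoted from \cite{wu2013linearized, wu2024permutationarXivlocal} --- but your argument is exactly the local-method derivation (Frobenius maps as the $\psi_i$, Lemma~\ref{leff-} to read off $L^{-1}$) that the paper attributes to the second of those references, so it takes essentially the same approach as the source being cited.
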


\section{The compositional inverses of the PPs of the form $ax^q+bx+(x^q-x)^k$ over $\gf_{q^2}$}

Yuan \cite{yuan2011permutation} studied the permutation behavior of the polynomials of the form $ax^q+bx+(x^q-x)^k$ over $\gf_{q^2}$  by applying the AGW criterion and gave the following result. 
\begin{theorem}\cite[Theorem 6.4]{yuan2011permutation}\label{1th2011yuan}
	Let $q$ be a prime power. \\
	(a) If $k\geqslant2$ is an even integer or $k$ is odd and $q$ is even, then $f_{a, b,k}(x)=ax^q+bx+(x^q-x)^k,$ $a, b \in \gf_{q^2} $ with $a+b \in \gf_q^*$ permutes $\gf_{q^2}$ if and only if $b\neq a^q.$\\
	(b) If $k$ and $q$ are odd positive integers, then $f_{a, k}(x)=ax^q+a^qx+(x^q-x)^k,$ $a \in \gf_{q^2}^* $ and $a+a^q \neq0$ permutes $\gf_{q^2}$ if and only if $\gcd(k, q-1)=1.$
\end{theorem}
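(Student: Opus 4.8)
The plan is to prove both parts by a single application of the AGW criterion (Lemma~\ref{leagw}), using the additive map $\lambda(x)=\bar{\lambda}(x)=x^q-x$. For $x\in\gf_{q^2}$ one has $(x^q-x)^q=-(x^q-x)$, so $\lambda$ carries $\gf_{q^2}$ onto the trace-zero set $S=\{y\in\gf_{q^2}:\Tr(y)=0\}=\{y:y^q=-y\}$, an $\gf_q$-subspace of dimension $1$ (hence $\sharp S=q$); thus $\lambda$ and $\bar{\lambda}$ are the required surjections onto $S=\bar{S}$. The whole argument then rests on one computation: expand $f(x)^q-f(x)$ to identify the induced map $h$ on $S$, after which Lemma~\ref{leagw} reduces the permutation question to (i) bijectivity of $h$ and (ii) injectivity of $f$ on each fiber $\lambda^{-1}(s)$. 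Since each fiber is a coset $x_0+\gf_q$ on which $x^q=x+s$ is constant, one checks in both parts that $f$ restricts to $x\mapsto(a+b)x+\mathrm{const}$, which is injective precisely because $a+b\neq0$ (this holds throughout: $a+b\in\gf_q^\ast$ in part (a), and $a+a^q=\Tr(a)\in\gf_q^\ast$ in part (b)); so condition (ii) is automatic, and the permutation property becomes equivalent to (i).

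For part (a), the hypothesis forces $(-1)^k=1$ (either $k$ is even, or $q$ is even so $-1=1$), which kills the nonlinear term in $f(x)^q-f(x)$. Using $a+b\in\gf_q$, equivalently $a^q+b^q=a+b$, I expect the linear part to collapse to $f(x)^q-f(x)=(b-a^q)(x^q-x)$, so that $h(y)=(b-a^q)y$. The key observation is that $a+b\in\gf_q$ is exactly equivalent to $b-a^q\in\gf_q$ (both assert $(a+b)^q=a+b$), so $h$ is multiplication by a \emph{scalar} $c=b-a^q\in\gf_q$; this guarantees $h(S)\subseteq S$ and makes $h$ a bijection of $S$ if and only if $c\neq0$, i.e. $b\neq a^q$. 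Combined with (ii), this yields part (a).

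For part (b) we have $b=a^q$, and now $k,q$ odd give $(-1)^k=-1$. The linear coefficients $a^q-b$ and $b^q-a$ both vanish, and I expect the computation to leave only $f(x)^q-f(x)=-2(x^q-x)^k$, so that $h(y)=-2y^k$ on $S$. The main obstacle is to decide when this power map bijects $S$. The device here is that $S$ is \emph{not} multiplicatively closed; instead $S^\ast=\alpha\,\gf_q^\ast$ is a single multiplicative coset for any fixed $\alpha\in S^\ast$ (since $y^q=-y$ forces the product of two elements of $S$ into $\gf_q$). Writing $y=\alpha t$ with $t\in\gf_q$, one checks $\alpha^{k-1}\in\gf_q$ because $k-1$ is even, so $h(\alpha t)=\alpha\cdot(-2\alpha^{k-1}t^k)$ identifies $h$, through the coordinate $t$, with the map $t\mapsto(-2\alpha^{k-1})\,t^k$ on $\gf_q$. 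As the leading scalar is nonzero, $h$ bijects $S$ if and only if $t\mapsto t^k$ permutes $\gf_q$, i.e. if and only if $\gcd(k,q-1)=1$; together with (ii) this gives part (b).

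The computational heart is the single identity $f(x)^q-f(x)=(a^q-b)x+(b^q-a)x^q+\big((-1)^k-1\big)(x^q-x)^k$, and the two hypotheses are engineered so that it simplifies cleanly in each case. I expect the only genuinely delicate point to be the verification that $h$ maps $S$ back into itself: in part (a) this is the equivalence $a+b\in\gf_q\Leftrightarrow b-a^q\in\gf_q$, and in part (b) it is the interplay between the odd exponent $k$ and the relation $y^q=-y$, together with the coset structure of $S^\ast$ that converts the power map on $S$ into a power map on $\gf_q$.
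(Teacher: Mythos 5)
Your proposal is correct and follows essentially the same route as the paper's source: the AGW criterion with $\lambda(x)=\bar{\lambda}(x)=x^q-x$, yielding exactly the commutation relations the paper quotes, namely $(x^q-x)\circ f_{a,b,k}=(b-a^q)x\circ(x^q-x)$ and $(x^q-x)\circ f_{a,k}=-2x^k\circ(x^q-x)$, with fiber injectivity coming from $a+b\neq0$. Your coordinate trick $S^\ast=\alpha\,\gf_q^\ast$ with $\alpha^{q-1}=-1$ is the same mechanism the paper exploits (via $(x^q-x)^{q-1}=-1$ in Lemma~\ref{lemma xk-}) to reduce the power map $-2y^k$ on $S$ to a power map on $\gf_q$, so the two treatments of part (b) coincide in substance.
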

 We will compute the compositional inverse of  $f_{a, b,k}(x)$  and $f_{a, k}(x)$ in Theorem \ref {1th2011yuan} by the dual diagram of the AGW criterion and the local method. We give a lemma at first. 
\begin{lemma}\label{lemma xk-}
	Let $q$ be an odd prime power and k be an odd positive integer. Let $S=\{\alpha^q-\alpha \mid \alpha \in \gf_{q^2}\}$ and $\bar{S}= \{-2s^k \mid s \in S\}$ be two subsets of $\gf_{q^2}.$ Assume that the polynomial $h(x)=-2x^k$ is a polynomial from $S$ to $\bar{S}.$ If there exist two integers $u, v $ such that $uk+v(q-1)=1,$ then the compositional inverse of $h(x)$ from $\bar{S}$ to $S$ is given by 
	$$h^{-1}(x)=(-1)^v(-2)^{-u}x^u.$$ 
\end{lemma}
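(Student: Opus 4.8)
The plan is to verify directly that the proposed monomial $h^{-1}(x)=(-1)^v(-2)^{-u}x^u$ is a left inverse of $h$ on $S$, and then to upgrade this to the full compositional inverse. The decisive structural input is an explicit description of the elements of $S$. Writing $s=\alpha^q-\alpha$ with $\alpha\in\gf_{q^2}$, I would compute
\begin{equation*}
s^q=(\alpha^q-\alpha)^q=\alpha^{q^2}-\alpha^q=\alpha-\alpha^q=-s,
\end{equation*}
so that every $s\in S$ satisfies $s^q=-s$; in particular each nonzero $s\in S$ obeys the crucial relation $s^{q-1}=-1$.

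With this relation in hand the core computation is short. For a nonzero $s\in S$,
\begin{equation*}
h^{-1}\bigl(h(s)\bigr)=(-1)^v(-2)^{-u}\bigl(-2s^k\bigr)^u=(-1)^v(-2)^{-u}(-2)^u s^{ku}=(-1)^v s^{ku}.
\end{equation*}
Since $uk+v(q-1)=1$, I rewrite $ku=1-v(q-1)$ and apply $s^{q-1}=-1$, obtaining $s^{ku}=s\,(s^{q-1})^{-v}=(-1)^{-v}s=(-1)^v s$, whence $h^{-1}\bigl(h(s)\bigr)=(-1)^{2v}s=s$.

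It then remains to handle $s=0$ and to promote this left-inverse identity to a genuine inverse. Among the integer solutions of $uk+v(q-1)=1$ one may take $u>0$, so that $h^{-1}(0)=0=h^{-1}\bigl(h(0)\bigr)$ and $h^{-1}\circ h=\mathrm{id}_S$; this forces $h$ to be injective, and since $\bar{S}=h(S)$ by definition, $h\colon S\to\bar{S}$ is a bijection whose left inverse $h^{-1}$ is automatically two-sided. The only delicate points are the sign bookkeeping attached to $s^{q-1}=-1$ and the cancellation of the factors $(-2)^{\pm u}$; there is no deeper obstacle, since once the identity $s^{q-1}=-1$ on $S$ is isolated the whole statement collapses onto the B\'ezout relation $uk+v(q-1)=1$.
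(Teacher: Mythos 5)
Your proof is correct and follows essentially the same route as the paper's: both hinge on the identity $s^{q-1}=-1$ for nonzero $s\in S$ (the paper states it as $(x^q-x)^{q-1}=-1$ for $x\in\gf_{q^2}\setminus\gf_q$), combine it with the B\'ezout relation $uk+v(q-1)=1$ to cancel the exponents, and treat $s=0$ separately. Your closing remark on promoting the left inverse to a two-sided inverse merely makes explicit a point the paper leaves implicit.
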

\begin{proof}
Since $\alpha^q-\alpha=0$ if  $\alpha \in \gf_q$, and $\alpha^q-\alpha\neq0$ if  $\alpha \in  \gf_{q^2}\setminus \gf_q, $  we have  $S=\{\alpha^q-\alpha \mid \alpha \in \gf_{q^2}\}=\{\alpha^q-\alpha \mid \alpha \in \gf_{q^2}\setminus \gf_q \}\cup\{0\}.$   Moreover, 
	Since $q$ is odd, we have \begin{equation} \label{1eqlemmayuan2011}
		(x^q-x)^{q-1}=
		\begin{cases}
			0, &\,\text {if $x \in \gf_q$ },\\
			-1	, &\,\text {if $ x \in  \gf_{q^2}\setminus \gf_q$ },		
		\end{cases}
	\end{equation} and so for any  $x \in \gf_{q^2}\setminus \gf_q$ ,  we obtain 
\begin{align*}
	\left((-1)^v(-2)^{-u}x^u\right)\circ h(x^q-x)=&\,\left((-1)^v(-2)^{-u}x^u\right) \circ \left(-2(x^q-x)^k\right)\\
	=&\,(-1)^v(x^q-x)^{uk}\\
	=&\, (x^q-x)^{v(q-1)+uk}\\
	=&\, x^q-x	
\end{align*}
as $uk+v(q-1)=1.$ Moreover, The value of $(-1)^v(-2)^{-u}x^u$  at $x=0$ is also equal to zero.
Hence, the compositional inverse of $h(x)$ is 
	$$h^{-1}(x)=(-1)^v(-2)^{-u}x^u.$$
We are done. 
\end{proof}
\begin{theorem}(i) Using the notations as in Theorem   \ref{1th2011yuan} (a). If the polynomial  $f_{a, b,k}(x)=ax^q+bx+(x^q-x)^k$ permutes $\gf_{q^2}$, then the compositional inverse of $f_{a, b,k}(x)$ over $\gf_{q^2}$ is given by 
	$$f_{a, b,k}^{-1}(x)=(a+b)^{-1}\left((b-a^q)^{-k}(x^q-x)^k-a(b-a^q)^{-1}(x^q-x)-x\right).$$ 
(ii) Using the notations as in Theorem \ref{1th2011yuan} (b). If the polynomial $f_{a, k}(x)=ax^q+a^qx+(x^q-x)^k$ permutes $\gf_{q^2},$  then the compositional inverse of $f_{a, k}(x)$ over $\gf_{q^2}$ is given by 
	$$f_{a, k}^{-1}(x)=(a+a^q)^{-1}\left((-1)^{kv}(-2)^{-ku}(x^q-x)^{ku}-(-1)^v(-2)^{-u}a(x^q-x)^{u}-x\right),$$  where  $u, v $ are intergers with $uk+v(q-1)=1.$
\end{theorem}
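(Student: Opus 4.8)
The plan is to run the dual AGW diagram (Lemma \ref{leagwf-}) together with the local method (Lemma \ref{leff-}) along the map $\lambda(x)=x^{q}-x$, and the first step is to rewrite $f$ in the single ``local coordinate'' $s:=x^{q}-x$. Writing $x^{q}=x+s$ gives the identity
$$f_{a,b,k}(x)=(a+b)x+a\,(x^{q}-x)+(x^{q}-x)^{k}=(a+b)x+as+s^{k},$$
and the same identity holds for $f_{a,k}$ with $b=a^{q}$. Here $s$ ranges over $S=\{\alpha^{q}-\alpha:\alpha\in\gf_{q^{2}}\}$, which is exactly the set of $s$ with $s^{q}=-s$, and $\lambda(x)=x^{q}-x$ is a surjection $\gf_{q^{2}}\to S$ whose fibres are the cosets $x_{0}+\gf_q$. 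Thus $f$ is affine in $x$ once $s$ is fixed, and the whole problem reduces to (a) recovering $s$ from $y=f(x)$ and (b) solving the affine relation for $x$.

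For step (a) I would compute the induced map $h$ on $S$ by applying the Frobenius difference to $y=f(x)$. Since $a+b\in\gf_q$ (exactly the hypothesis of Theorem \ref{1th2011yuan}) we have $(a+b)^{q}=a+b$, and since $s^{q}=-s$ we get $s^{kq}=(-1)^{k}s^{k}$; substituting $b^{q}=a+b-a^{q}$ collapses the linear part and yields
$$y^{q}-y=(b-a^{q})\,s+\big((-1)^{k}-1\big)\,s^{k}.$$
The parity hypotheses then split into the two parts of the statement. In part (i) ($k$ even, or $k$ odd with $q$ even) the coefficient $(-1)^{k}-1$ vanishes, so $h(s)=(b-a^{q})s$ is linear; moreover $b-a^{q}=(a+b)-(a+a^{q})\in\gf_q$ is nonzero precisely because the permutation condition is $b\ne a^{q}$, so $h$ is invertible and $s=(b-a^{q})^{-1}(y^{q}-y)$. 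In part (ii) ($k,q$ odd, $b=a^{q}$) the linear coefficient $b-a^{q}$ vanishes while $(-1)^{k}-1=-2\ne0$, so $h(s)=-2s^{k}$ and one is forced to invert a monomial on $S$.

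Part (ii) is where the real work sits, and it is handled by Lemma \ref{lemma xk-}: using $\gcd(k,q-1)=1$ (the permutation condition) to pick $u,v$ with $uk+v(q-1)=1$, the inverse of $h$ on $S$ is $h^{-1}(x)=(-1)^{v}(-2)^{-u}x^{u}$, whose verification rests on $(x^{q}-x)^{q-1}=-1$ for $x\notin\gf_q$ together with the separate treatment of $s=0$ (the fibre over $\gf_q$) --- the one point needing care. Reading the dual diagram of Lemma \ref{leagwf-} with $\bar\lambda=\lambda$ says precisely that $f^{-1}(x)^{q}-f^{-1}(x)=h^{-1}(x^{q}-x)$, i.e.\ it recovers $s$ for the inverse. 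For step (b) I would then solve the affine relation $(a+b)x=y-as-s^{k}$ for $x$ (legitimate since $a+b\ne0$) and substitute the value of $s$ just found. Concretely, with $\psi_{1}(x)=x^{q}-x$ and $\psi_{2}(x)=x$ one has $\varphi_{1}=\psi_{1}\circ f$, $\varphi_{2}=\psi_{2}\circ f$, and $x=F(\varphi_{1},\varphi_{2})$ for $F(x_{1},x_{2})=(a+b)^{-1}\big(x_{2}-a\,h^{-1}(x_{1})-h^{-1}(x_{1})^{k}\big)$; Lemma \ref{leff-} then delivers $f^{-1}(x)=F(x^{q}-x,\,x)$, which unwinds to the stated expressions in parts (i) and (ii). The only genuine obstacle is the monomial inversion of part (ii); once Lemma \ref{lemma xk-} is in hand, the rest is the routine substitution recorded in the statement.
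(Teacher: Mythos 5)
Your route is exactly the paper's: recover $s=x^q-x$ from $y=f(x)$ via the dual AGW diagram (Lemma \ref{leagwf-}), invert the induced map $h$ on $S$ (multiplication by $b-a^q$ in part (i), Lemma \ref{lemma xk-} in part (ii)), then eliminate $x^q$ and solve the resulting affine relation by the local method (Lemma \ref{leff-}) with $\psi_1(x)=x^q-x$, $\psi_2(x)=x$. Your intermediate identity $y^q-y=(b-a^q)s+\left((-1)^k-1\right)s^k$ and the case split are precisely what the paper does, and your algebra is sound.

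The flaw is your last claim, that $F(x_1,x_2)=(a+b)^{-1}\left(x_2-a\,h^{-1}(x_1)-h^{-1}(x_1)^k\right)$ ``unwinds to the stated expressions.'' It does not: in the stated formulas the terms $x$ and $(x^q-x)^k$ (resp.\ $(x^q-x)^{ku}$) carry the \emph{opposite} signs from yours. In fact your formula is the correct one and the stated formulas are erroneous. Composing the stated part (i) expression with $f_{a,b,k}$, and using $f^q-f=(b-a^q)s$ together with $f=(a+b)x+as+s^k$, gives
$$(a+b)^{-1}\left(s^k-as-f_{a,b,k}(x)\right)=-x-2a(a+b)^{-1}s,$$
which is identically $x$ only in characteristic $2$; concretely, for $q=3$, $k=2$, $a=0$, $b=1$, the permutation $f(x)=x+(x^3-x)^2$ of $\gf_9$ has inverse $x-(x^3-x)^2$ (your formula), whereas the stated formula gives $(x^3-x)^2-x$, and the same computation shows the stated part (ii) formula fails (there $q$ is necessarily odd). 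The source is a sign slip in the paper's own proof: its displayed system asserts $ax^q+bx=\psi_1^k(x)-\psi_2(x)$ with $\psi_2=f_{a,b,k}$ and $\psi_1=x^q-x$, whereas in truth $ax^q+bx=\psi_2(x)-\psi_1^k(x)$, and the error propagates into both final formulas. So instead of asserting agreement with the statement, you should record your versions as the corrected result:
$$f_{a,b,k}^{-1}(x)=(a+b)^{-1}\left(x-a(b-a^q)^{-1}(x^q-x)-(b-a^q)^{-k}(x^q-x)^k\right),$$
and, in part (ii), the analogous expression with $(-1)^v(-2)^{-u}(x^q-x)^u$ in place of $(b-a^q)^{-1}(x^q-x)$.
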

\begin{proof}
	(i) As it  have been shown in \cite{yuan2011permutation} that $$(x^q-x)\circ f_{a, b,k}=(b-a^q)x \circ (x^q-x),$$  
	we have  $$x^q-x=(b-a^q)^{-1}x \circ (x^q-x)\circ f_{a, b,k}$$ by  Lemma \ref{leagwf-}.
	
	Let $\psi_1(x)=x^q-x=(b-a^q)^{-1}x \circ (x^q-x)\circ f_{a, b,k},$ $\psi_2(x)=f_{a, b,k}(x)$, $\varphi_1(x)=(b-a^q)^{-1}(x^q-x),$ and $\varphi_2(x)=x.$
	Then we have the system of the equations 
	\begin{equation*}
		\begin{cases}
			ax^q+bx&=\psi_1^k(x)-\psi_2(x),\\
			x^q-x&=\psi_1(x).
		\end{cases}
	\end{equation*}
	By eliminating the indeterminate $x^q$ using the above system, we obtain $$x=(a+b)^{-1}\left(\psi_1^k(x)-\psi_2(x)-a\psi_1(x)\right).$$
	It follows from Lemma \ref{leff-} that the compositional inverse of$f_{a, b,k}(x)$ over $\gf_{q^2}$ is given by 
	$$f_{a, b,k}^{-1}(x)=(a+b)^{-1}\left((b-a^q)^{-k}(x^q-x)^k-a(b-a^q)^{-1}(x^q-x)-x\right).$$ 
	
	(ii) Let $S=\{\alpha^q-\alpha \mid \alpha \in \gf_{q^2}\},$  $\bar{S}= \{-2s^k \mid s \in S\}$ be two subsets of $\gf_{q^2}$ and $h(x)=-2x^k$ be a polynomial from $S$ to  $\bar{S}.$  As it have been shown in \cite{yuan2011permutation} that $$(x^q-x)\circ f_{a, k}=h(x) \circ (x^q-x), $$ we have   $$x^q-x=h^{-1}(x) \circ (x^q-x)\circ f_{a, k}$$ according to Lemma \ref{leagwf-}.
	
	Let $\psi_1(x)=x^q-x=h^{-1}(x) \circ (x^q-x)\circ f_{a, k},$ $\psi_2(x)=f_{a, k}(x)$, $\varphi_1(x)=h^{-1}(x) \circ (x^q-x),$ and $\varphi_2(x)=x.$
	Then we have the system of the equations 
	\begin{equation*}
		\begin{cases}
			ax^q+a^qx&=\psi_1^k(x)-\psi_2(x),\\
			x^q-x&=\psi_1(x).
		\end{cases}
	\end{equation*}
	By eliminating the indeterminate $x^q$ using the above system, we obtain $$x=(a+a^q)^{-1}\left(\psi_1^k(x)-\psi_2(x)-a\psi_1(x)\right).$$
	It follows from Lemmas \ref{leff-} and \ref{lemma xk-} that the compositional inverse of$f_{a, k}(x)$ over $\gf_{q^2}$ is given by 
	$$f_{a, k}^{-1}(x)=(a+a^q)^{-1}\left((-1)^{kv}(-2)^{-ku}(x^q-x)^{ku}-(-1)^v(-2)^{-u}a(x^q-x)^{u}-x\right).$$ 
\end{proof}

\section{The compositional inverse of a class of CPP over $\gf_{q^3}$}

Ma al et. \cite{ma2017some} studied a class of complete permutation polynomial and gave the following result.

\begin{theorem} \cite[Theorem 4.1]{ma2017some}\label{thmma2017some4.1}
	Let $q$ be an odd prime power,  $f(x)=-x+x^{(q^2+1)/2}+x^{(q^3+q)/2}$ is a CPP over $\gf_{q^3}.$
\end{theorem}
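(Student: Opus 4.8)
The goal is to show that both $f(x)$ and $f(x)+x$ permute $\gf_{q^3}$. The plan is to reduce everything to the monomial $g(x):=x^{(q^2+1)/2}$, noting that $x^{(q^3+q)/2}=g(x)^q$, so that $f(x)=-x+g(x)+g(x)^q$ and $f(x)+x=g(x)+g(x)^q$. First I would record two arithmetic facts: since $q$ is odd, $(q^2+1)/2$ is odd and $\gcd(q^2+1,q^3-1)=2$, whence $\gcd\!\big((q^2+1)/2,\,q^3-1\big)=1$ and $g$ is a permutation of $\gf_{q^3}$; and the linearized map $L(y)=y+y^q$ is a permutation, because $L(y)=0$ forces $y^{q-1}=-1$ for $y\neq 0$, which is impossible since $-1$ has order $2$ while the image of $y\mapsto y^{q-1}$ is the group $\mu$ of $(q^2+q+1)$-th roots of unity, of odd order $q^2+q+1$. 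Consequently $f(x)+x=L(g(x))$ is a composition of two permutations, hence a permutation; this disposes of the ``complete'' half.

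The heart of the matter is that $f$ itself permutes $\gf_{q^3}$. The key observation is multiplicative homogeneity: for $c\in\gf_q^*$ one has $g(cx)=c^{(q^2+1)/2}g(x)=c\,g(x)$, because $c^{(q^2-1)/2}=(c^{q-1})^{(q+1)/2}=1$; therefore $f(cx)=c\,f(x)$. This lets me apply the AGW criterion (Lemma \ref{leagw}) with $\lambda(x)=\bar\lambda(x)=x^{q-1}$, whose fibers are the cosets of $\gf_q^*$ together with $\{0\}$. Homogeneity makes $f(x)^{q-1}$ depend only on $x^{q-1}$, so there is an induced map $h$ on $\mu\cup\{0\}$ with $\lambda\circ f=h\circ\lambda$; by Lemma \ref{leagw}, $f$ is a permutation if and only if $h$ is a bijection of $\mu$ and $f$ is injective on every fiber. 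Injectivity on the fiber $x_0\gf_q^*$ is equivalent, again by homogeneity, to $f(x_0)\neq 0$, so the fiber condition is exactly that $f$ has no nonzero root.

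To rule out nonzero roots, I would suppose $f(x)=0$ with $x\neq 0$, i.e. $g(x)+g(x)^q=x$. Writing $a=g(x)$, one has $x=g^{-1}(a)=a^{1+q-q^2}$ (which one verifies from $\tfrac{q^2+1}{2}(1+q-q^2)\equiv 1\pmod{q^3-1}$), so multiplying $a+a^q=a^{1+q-q^2}$ by $a^{q^2}$ gives $a^{q^2+1}+a^{q^2+q}=a^{q+1}$. Setting $c=a^{q+1}$ and using $c^q=a^{q^2+q}$, $c^{q^2}=a^{q^2+1}$, this reads $c^{q^2}+c^q=c$, whence $\Tr(c)=c+c^q+c^{q^2}=2c$; since $\Tr(c)\in\gf_q$ this forces $c\in\gf_q$, and then $c=c^q+c^{q^2}=2c$ gives $c=0$, i.e. $a=x=0$, a contradiction. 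Thus $f$ has no nonzero root and the fiber condition holds.

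It remains to prove that the induced map $h$ is a bijection of $\mu$, which I expect to be the main obstacle. Writing $\rho=x^{q-1}$ and $t=\rho^{(q+1)/2}=x^{(q^2-1)/2}$, a direct computation gives $f(x)=x\,D$ with $D=-1+t+t^q\rho$; here $D=f(x)/x\neq 0$ by the previous step, and $h(\rho)=f(x)^{q-1}=\rho\,D^{q-1}$. Because $\mu$ has odd order, squaring is a bijection of $\mu$, so I would set $\rho=\tau^2$ with $\tau\in\mu$; using $\tau^{q^2+q+1}=1$ one gets $D=\tau^{q+1}+\tau-1$ and the map collapses to $h(\tau^2)=\tau\cdot\dfrac{\tau^{q+1}-\tau+1}{\tau^{q+1}+\tau-1}$. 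The remaining task is to show this rational map is injective on $\mu$: clearing denominators in $h(\tau_1^2)=h(\tau_2^2)$ and reducing exponents modulo $q^2+q+1$ yields a polynomial relation in $\tau_1,\tau_2$ from which the factor $\tau_1-\tau_2$ must be extracted, the complementary factor being shown not to vanish on $\mu$. This elimination is the technical crux; once it is in place, Lemma \ref{leagw} shows $f$ is a permutation, and together with the first paragraph $f$ is a CPP over $\gf_{q^3}$.
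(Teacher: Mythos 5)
First, a point of context: the paper does not prove this statement at all --- Theorem \ref{thmma2017some4.1} is quoted from Ma et al.\ \cite{ma2017some}, and Section 4 only uses it as a hypothesis when computing compositional inverses. So your attempt must stand on its own, and it does not: it has a genuine gap exactly where the content of the theorem lies. Everything you actually prove is correct. The decomposition $f(x)+x=L(g(x))$ with $g(x)=x^{(q^2+1)/2}$, $L(y)=y+y^q$, and the verification that both are permutations (your gcd computation and the observation that $-1$ is not in the group $\mu$ of $(q^2+q+1)$-th roots of unity) settle the ``complete'' half. The homogeneity $f(cx)=cf(x)$ for $c\in\gf_q^*$, the AGW setup with $\lambda(x)=\bar\lambda(x)=x^{q-1}$, the reduction of the fiber condition to ``$f$ has no nonzero root,'' and your trace argument (from $c^{q^2}+c^q=c$ with $c=a^{q+1}$ deduce $\Tr(c)=2c$, hence $c\in\gf_q$, hence $c=0$) are all sound. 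But Lemma \ref{leagw} also requires the induced map to be a bijection of $\mu$, and this you never establish: you reduce it to injectivity on $\mu$ of $\tau\mapsto\tau\,(\tau^{q+1}-\tau+1)/(\tau^{q+1}+\tau-1)$ and then stop, calling the elimination ``the technical crux.'' That crux \emph{is} the theorem: the fiber condition you verified only rules out $f$ collapsing a single coset $x_0\gf_q^*$, and says nothing about two distinct cosets having the same image. Nothing in your sketch (``extract the factor $\tau_1-\tau_2$, show the cofactor does not vanish on $\mu$'') indicates how to control the resulting two-variable expression of degree about $q+2$, so as written you have proved that $f(x)+x$ permutes and that $f$ vanishes only at $0$ --- but not that $f$ permutes.

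The gap can be closed cleanly, and in fact by machinery already present in this paper (Section 4 uses it only to invert $f$, since the permutation property is cited). Since $g$ is a permutation, write $f=h\circ g$ with $h(y)=y+y^q-y^{1+q-q^2}$; your trace argument shows $h(y)=0$ forces $y=0$. Now suppose $h(y)=a\neq0$, so $y\neq0$. Raising this to the powers $q$ and $q^2$, multiplying the two resulting identities by $y$ and $y^q$ respectively, and adding them, all mixed terms cancel and one gets $2y^{1+q}=a^qy+a^{q^2}y^q$, i.e.\ $a^qt^q+a^{q^2}t-2=0$ for $t=1/y$. By Lemma \ref{lemmmama1} this equation has a \emph{unique} solution in $\gf_{q^3}$, so $y$ --- and hence $x=g^{-1}(y)$ --- is uniquely determined by $a$. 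Thus $h$ is injective on $\gf_{q^3}$, hence a permutation, and so is $f$. This is precisely the computation the paper performs in Section 4, read as an injectivity proof rather than as an inversion formula; substituting it for your unfinished elimination on $\mu$ (and discarding the AGW scaffolding, which is then no longer needed for this half) completes your proof.
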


To compute the composition inverse of $f(x)$ and $f(x)$ in Theorem \ref{thmma2017some4.1},  we give a lemma first. 

\begin{lemma}\label{lemmmama1}
	Let $q$ be an odd prime. For any $a \in \gf_{q^3}^*,$ the unique solution of  the equation $a^qx^q+a^{q^2}x-2=0$ over $\gf_{q^3}$ is $x=a^{-(q^2+q+1)}(a^{q+1}-a^{2q}+a^{q^2+q}).$
\end{lemma}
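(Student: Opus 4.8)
The plan is to read the equation as a linearized one. Set $L(x)=a^q x^q+a^{q^2}x$, a $q$-linearized polynomial over $\gf_{q^3}$; since $q$ is odd we have $2\in\gf_q$, so the equation $a^qx^q+a^{q^2}x-2=0$ is simply $L(x)=2$. Because $L$ is additive, its solution set, if nonempty, is a coset of $\ker L$. Thus the statement splits into two tasks: (i) prove $\ker L=\{0\}$, which yields uniqueness, and (ii) exhibit the asserted element as a genuine solution.

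For (i) I would invoke the Dickson-matrix criterion recalled in Section~2. Writing $L(x)=\sum_{i=0}^{2}a_ix^{q^i}$ with $a_0=a^{q^2}$, $a_1=a^q$, $a_2=0$, and using $a^{q^3}=a$ together with $a^{q^4}=a^q$, the associated Dickson matrix is
\[
D=\begin{pmatrix} a^{q^2} & a^q & 0 \\ 0 & a & a^{q^2} \\ a & 0 & a^q \end{pmatrix}.
\]
A direct cofactor expansion gives $\det D=2\,a^{q^2+q+1}=2\,N_{\gf_{q^3}/\gf_q}(a)$, which is nonzero because $a\in\gf_{q^3}^*$ and the characteristic is odd. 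Hence $L$ is a linearized permutation polynomial, so $\ker L=\{0\}$ and $L(x)=2$ has at most one solution in $\gf_{q^3}$.

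For (ii) the formula is most transparently recovered by Cramer's rule: applying the Frobenius $x\mapsto x^q$ to $L(x)=2$ twice, and using $a^{q^3}=a$ and $2^q=2$, produces the $3\times3$ linear system in the unknowns $x,x^q,x^{q^2}$ whose coefficient matrix is exactly the $D$ above and whose right-hand side is $(2,2,2)^{\mathsf T}$. Solving for the first unknown gives $x=\det(D_0)/\det(D)$, where $D_0$ is $D$ with its first column replaced by $(2,2,2)^{\mathsf T}$; computing $\det(D_0)=2\bigl(a^{q+1}-a^{2q}+a^{q^2+q}\bigr)$ and dividing by $\det D=2a^{q^2+q+1}$ yields the asserted expression. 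Equivalently, and perhaps cleaner for the write-up, one can substitute $x_0=a^{-(q^2+q+1)}\bigl(a^{q+1}-a^{2q}+a^{q^2+q}\bigr)$ directly into $L$; since $a^{q^2+q+1}=N_{\gf_{q^3}/\gf_q}(a)\in\gf_q$ is Frobenius-invariant, a short computation of $a^qx_0^q+a^{q^2}x_0$ collapses, after using $a^{q^3}=a$, to $2a^{-(q^2+q+1)}\cdot a^{q^2+q+1}=2$.

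All the computations are routine; the only thing demanding care is the exponent bookkeeping, namely reducing powers of $a$ modulo the relation $a^{q^3}=a$ and checking that the cross terms $a^{q^2+2q}$ and $a^{2q^2+q}$ cancel in pairs when $a^qx_0^q$ and $a^{q^2}x_0$ are added. I expect this cancellation to be the main (though entirely mechanical) obstacle; everything else reduces to the single nonvanishing determinant $2\,N_{\gf_{q^3}/\gf_q}(a)$.
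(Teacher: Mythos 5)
Your proposal is correct and takes essentially the same route as the paper: both read the equation as $L(x)=2$ with $L(x)=a^qx^q+a^{q^2}x$, get uniqueness from the nonvanishing Dickson determinant $2a^{q^2+q+1}$, and extract the explicit solution from the inverse of that same matrix. The only cosmetic difference is that the paper cites its general cofactor formula for the compositional inverse of a linearized permutation polynomial (Lemma~\ref{linearinverse}) and evaluates $L^{-1}$ at $2$ (via $A^{-1}(0)$ with $A(x)=(x-2)\circ L(x)$), whereas you re-derive the same data by Cramer's rule on the Frobenius-expanded system (or by direct substitution) --- mathematically the identical linear algebra.
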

\begin{proof}
	Let $A(x)=a^qx^q+a^{q^2}x-2=(x-2)\circ L(x)$, where $L(x)=a^qx^q+a^{q^2}x$ is a linearized polynomial over $\gf_{q^3}.$ Since the determinant of Dickson matrix of $L(x)$ is $2a^{q^2+q+1} \neq 0,$ we have that $L(x)$ permutes $\gf_{q^3},$ and so $A(x)$ permutes $\gf_{q^3},$ that is to say,  that the equation $a^qx^q+a^{q^2}x-2=0$ has a unique solution over $\gf_{q^3}.$ 
	Moreover, according to Lemma \ref{linearinverse}, we have that the compositional inverse of $L(x)$ over $\gf_{q^3}$ is  $$L^{-1}(x)=(2a^{q^2+q+1})^{-1}(a^{q+1}x-a^{2q}x^q+a^{q^2+q}x^{q^2}).$$
	This yields that 
	the compositional inverse of $A(x)$ over $\gf_{q^3}$ is 
	$$A^{-1}(x)=L^{-1}(x)\circ (x+2)=(2a^{q^2+q+1})^{-1}(a^{q+1}(x+2)-a^{2q}(x+2)^q+a^{q^2+q}(x+2)^{q^2}).$$
	Therefore, $A^{-1}(0)=a^{-(q^2+q+1)}(a^{q+1}-a^{2q}+a^{q^2+q}).$
	We complete the proof. 
\end{proof}

\begin{theorem}
	Using the notations as in Theorem \ref{thmma2017some4.1}, if 
	 $f(x)=-x+x^{(q^2+1)/2}+x^{(q^3+q)/2}$ is a CPP  over $\gf_{q^3}.$ 
	Then the compositional inverse of $f(x)+x$ over $\gf_{q^3}$ is $$\left(f(x)+x\right)^{-1}= \left((x-x^q+x^{q^2})/2\right)^{q^3-q^2+q}.$$
	and the compositional inverse of $f(x)$ over $\gf_{q^3}$ is 
	\begin{align*}
		f^{-1}(x)
		=&\, \left(x^{q^2+q+1}(x^{q+1}-x^{2q}+x^{q^2+q})^{q^3-2}\right)^{q^3-q^2+q}.
	\end{align*}
\end{theorem}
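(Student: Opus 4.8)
The plan is to handle both compositional inverses by the local method (Lemma \ref{leff-}), using the single auxiliary substitution $y = x^{(q^2+1)/2}$. First I would record the exponent bookkeeping. Writing $e = (q^2+1)/2$ and $e' = q^3 - q^2 + q$, a short computation modulo $q^3-1$ gives $e e' \equiv 1$: indeed $2 e e' = (q^2+1)(q^3-q^2+q) = 2 + (q^2-q+2)(q^3-1)$, and $q^2-q+2$ is even when $q$ is odd, so $e e' = 1 + \tfrac{q^2-q+2}{2}(q^3-1) \equiv 1 \pmod{q^3-1}$. Hence $x \mapsto x^e$ is a permutation of $\gf_{q^3}$ with inverse $x \mapsto x^{e'}$. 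Since $x^{(q^3+q)/2} = (x^e)^q = y^q$, I can rewrite $f(x) = -x + y + y^q$ and $f(x)+x = y + y^q = L(y)$, where $L(w) = w + w^q$ is a linearized permutation of $\gf_{q^3}$ with inverse $L^{-1}(w) = (w - w^q + w^{q^2})/2$ (by direct check, or by Lemma \ref{linearinverse}).

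For $f(x)+x$ the local method is immediate: applying $L^{-1}$ to $f(x)+x = L(y)$ gives $L^{-1}(f(x)+x) = y = x^e$, so with $\psi_1 = L^{-1}$, $\varphi_1(x) = x^e$, and $F(u) = u^{e'}$ (so that $x = \varphi_1(x)^{e'}$), Lemma \ref{leff-} yields $(f(x)+x)^{-1} = (L^{-1}(x))^{e'} = ((x - x^q + x^{q^2})/2)^{q^3-q^2+q}$.

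For $f(x)$ itself the intermediate $y$ no longer enters linearly, so the crux is to recover $y$ from $t := f(x)$. Writing the Frobenius conjugates $t^q = -x^q + y^q + y^{q^2}$ and $t^{q^2} = -x^{q^2} + y + y^{q^2}$, I would form $t^q y + t^{q^2} y^q$ and aim to reduce it to $2y^{1+q}$. Expansion leaves the extra terms $-x^q y - x^{q^2} y^q + y^{1+q^2} + y^{q+q^2}$, and the reduction hinges on the two termwise identities $y^{1+q^2} = x^{q^2} y^q$ and $y^{q+q^2} = x^q y$. These follow from $y = x^e$ by the same exponent arithmetic: $e(1 + q^2 - q) - q^2 = \tfrac{(q-1)(q^3-1)}{2} \equiv 0$ and $e(q + q^2 - 1) - q = \tfrac{(q+1)(q^3-1)}{2} \equiv 0 \pmod{q^3-1}$, both integral since $q$ is odd. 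Granting them, the extra terms cancel and I obtain the key relation $t^q y + t^{q^2} y^q = 2 y^{1+q}$.

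Dividing the key relation by $y^{1+q}$ shows that $z = 1/y$ solves $t^q z^q + t^{q^2} z = 2$, so for $x \neq 0$ (whence $t = f(x) \neq 0$, as $f$ permutes and $f(0)=0$) Lemma \ref{lemmmama1} with $a = t$ identifies this unique solution as $1/y = t^{-(q^2+q+1)}(t^{q+1} - t^{2q} + t^{q^2+q})$, giving $y = t^{q^2+q+1}(t^{q+1} - t^{2q} + t^{q^2+q})^{q^3-2}$. Setting $\psi_1(x) = x^{q^2+q+1}(x^{q+1} - x^{2q} + x^{q^2+q})^{q^3-2}$, this reads $\psi_1(f(x)) = y = x^e = \varphi_1(x)$, and since again $x = \varphi_1(x)^{e'}$, Lemma \ref{leff-} yields $f^{-1}(x) = (\psi_1(x))^{q^3-q^2+q}$, the claimed formula; the value $x=0$ is checked directly, both sides vanishing. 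I expect the main obstacle to be controlling the cancellation in the key relation, that is, pinning down the two termwise identities and the accompanying congruences modulo $q^3-1$ where the oddness of $q$ is essential, together with the bookkeeping that $t \neq 0$ and that the denominator $t^{q+1}-t^{2q}+t^{q^2+q}$ is nonzero so that Lemma \ref{lemmmama1} applies.
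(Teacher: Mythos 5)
Your proposal is correct and follows essentially the same route as the paper: both decompose via the monomial bijection $x^{(q^2+1)/2}$, invert the linearized part $x+x^q$ to handle $f(x)+x$, and for $f(x)$ reduce to the equation $t^qz^q+t^{q^2}z=2$ in $z=1/y$ and invoke Lemma \ref{lemmmama1} before composing with $x^{q^3-q^2+q}$. The only cosmetic difference is that the paper introduces $h(x)=x+x^q-x^{1+q-q^2}$ with $f=h\circ x^{(q^2+1)/2}$ and obtains your key relation $t^qy+t^{q^2}y^q=2y^{1+q}$ by adding two equations of the Frobenius-conjugate system for $h$, whereas you derive the same relation directly by exponent arithmetic modulo $q^3-1$.
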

\begin{proof}
	Note that 
	\begin{equation}\label{1eqma1}
		f(x)+x=x^{(q^2+1)/2}+x^{(q^3+q)/2}=g(x^{(q^2+1)/2}),
	\end{equation}
	where are $g(x)=x+x^q$.
	
	Since $g(x)$ is a linearized polynomial over $\gf_{q^3}$ and the  determinant of Dickson matrix of $g(x)$ is $2$, it implies by Lemma \ref{linearinverse} that the compositional inverse of $g(x)$ over $\gf_{q^3}$ is 
	\begin{equation}\label{2eqma1}
		g^{-1}(x)=(x-x^q+x^{q^2})/2.
	\end{equation}
	Moreover, since 
	$(q^3-q^2+q)(q^2+1)/2-\left((q^2-q+2)/2\right)(q^3-1)=1$, it follows from Eqs. \eqref{1eqma1} and \eqref{2eqma1} that 
	\begin{align*}
		\left(f(x)+x\right)^{-1}=&\,\left(x^{(q^2+1)/2}\right)^{-1} \circ g^{-1}(x)\\
		=&\, \left((x-x^q+x^{q^2})/2\right)^{q^3-q^2+q}.
	\end{align*}
	
	Now we will study the compositional inverse of $f(x)$ over $\gf_{q^3}.$ Put $h(x)=x+x^q-x^{1+q-q^2}.$ Then we have 
	\begin{equation}\label{1eqma1f(x)}
		f(x)=h(x) \circ x^{(q^2+1)/2}
	\end{equation}
	Therefore, it is crucial to calculate the compositional inverse of $h(x)$ over $\gf_{q^3}.$ 
	
	Let $\psi_1(x)=x, \psi_2(x)=x^q, \psi_3(x)=x^{q^3}, $ $ \varphi_1(x)=\psi_1(x)\circ h(x)=h(x), \varphi_2(x)=\psi_2(x)\circ h(x)=h^q(x)$, and $  \varphi_3(x)=\psi_3(x)\circ h(x)=h^{q^2}(x).$ For simplicity, put $\varphi_1(x)=a, \varphi_2(x)=b, \varphi_3(x)=c.$
	Then  $c=b^q=a^{q^2}.$ As it has been shown in \cite{ma2017some}  that $f(x)$ has a unique  root $0$ in $\gf_{q^3},$  we  assume that $x\neq 0$  so that  $abc\neq0.$ By substituting $y=x^q,$ $z=y^q, $ we obtain the system of equations 
	\begin{equation*}
		\begin{cases}
			x+y+\frac{xy}{z}=a,\\
			y+z+\frac{yz}{x}=b,\\
			z+x+\frac{xz}{y}=c,\\
		\end{cases}
	\end{equation*}
	which can be rewritten as 
	\begin{equation}\label{1eqsma1}
		\begin{cases}
			xz+yz-xy=&az,\\
			xy+xz-yz=&bx,\\
			yz+xy-xz=&cy.\\	
		\end{cases}
	\end{equation}
	By adding the last two equations of Eq. \eqref{1eqsma1}, 
	we have $$2xy=bx+cy,$$ or, equivalently, $$2x^q=a^q+a^{q^2}x^{q-1}$$ because $c=b^q=a^{q^2}$, $y=x^{q}$ and $x\neq 0.$ Setting $t=1/x,$ we obtain $$a^qt^q+a^{q^2}t-2=0$$ by the above equation. According to Lemma 
	\ref{lemmmama1}, we have $t=a^{-(q^2+q+1)}(a^{q+1}-a^{2q}+a^{q^2+q}),$ and so $x=a^{q^2+q+1}(a^{q+1}-a^{2q}+a^{q^2+q})^{q^3-2}.$
	Hence, by Lemma \ref{leff-}, the compositional inverse of $f(x)$ over $\gf_{q^3}$ is 
	\begin{align*}
		f^{-1}(x)=&\,\left(x^{(q^2+1)/2}\right)^{-1} \circ h^{-1}(x)\\
		=&\, \left(x^{q^2+q+1}(x^{q+1}-x^{2q}+x^{q^2+q})^{q^3-2}\right)^{q^3-q^2+q}.
	\end{align*}
	We are done.
\end{proof}

\section{The permutation polynomial of the form $A(x)^m +L(x)$ and its inverse over $\gf_{q^n}$}

In this section, we investigate the permutation behavior of the form the form $A(x)^m +L(x)$ and its inverse over $\gf_{q^n}.$

We give a  property of linearized polynomial over $\gf_{q^n}$ at first.

\begin{lemma} \label{DL}\cite[Proposition 4.4] {wu2013linearized}
	For any linearize polynomial $L(x)$ over $\gf_{q^n}$, then $rank(L)=rank(D)$, where rank(L) is the rank of the linear transformation induced by $L(x),$ and $D$ is the Dickson matrix associated to $L(x).$ 
\end{lemma}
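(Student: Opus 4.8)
The plan is to realize the Dickson matrix $D$ as the coordinate description of the $\gf_q$-linear map induced by $L$, after transporting $\gf_{q^n}$ into $\gf_{q^n}^{\,n}$ via the Frobenius orbit map. Write $L(x)=\sum_{i=0}^{n-1}a_ix^{q^i}$ and, for $x\in\gf_{q^n}$, set $\mathbf{y}(x)=(x,x^q,\dots,x^{q^{n-1}})^{\top}$. The first step is to record the key identity
$$\mathbf{y}\big(L(x)\big)=D\,\mathbf{y}(x)\qquad(x\in\gf_{q^n}).$$
This is a direct computation: the $k$-th coordinate of the left-hand side is $L(x)^{q^k}=\sum_{l}a_l^{q^k}x^{q^{l+k}}$, and reducing the exponent modulo $n$ (legitimate because $x^{q^n}=x$ on $\gf_{q^n}$) regroups it as $\sum_i a_{(i-k)\bmod n}^{q^k}x^{q^i}$, which is exactly the $k$-th coordinate of $D\,\mathbf{y}(x)$ by the definition of the entries of $D$. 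The only subtlety here is the cyclic bookkeeping of the indices, so that the shifted powers $a_{(i-k)\bmod n}^{q^k}$ are correctly matched with the stated form of the Dickson matrix; this index matching is the one slightly delicate point of the whole argument.

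Next I would fix an $\gf_q$-basis $b_1,\dots,b_n$ of $\gf_{q^n}$ and form the Moore matrix $M$ whose $j$-th column is $\mathbf{y}(b_j)$. Since $b_1,\dots,b_n$ are $\gf_q$-linearly independent, the Moore determinant $\det M$ is nonzero, so $M$ is invertible over $\gf_{q^n}$. Let $\mathcal{M}=(m_{ij})\in\gf_q^{\,n\times n}$ be the matrix of $L$ in this basis, i.e. $L(b_j)=\sum_i m_{ij}b_i$, so that by definition $\mathrm{rank}(L)=\mathrm{rank}_{\gf_q}(\mathcal{M})$. Applying the identity above to $x=b_j$, and using that the entries $m_{ij}\in\gf_q$ are fixed by the Frobenius together with the $\gf_q$-linearity of $\mathbf{y}$, one obtains $D\,\mathbf{y}(b_j)=\mathbf{y}\big(L(b_j)\big)=\sum_i m_{ij}\,\mathbf{y}(b_i)$; read columnwise this says $DM=M\mathcal{M}$, whence
$$D=M\mathcal{M}M^{-1}.$$

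Finally I would conclude by invariance of rank. The displayed similarity over $\gf_{q^n}$ gives $\mathrm{rank}(D)=\mathrm{rank}_{\gf_{q^n}}(\mathcal{M})$, and since the rank of a matrix with entries in $\gf_q$ is unchanged under the field extension $\gf_q\subseteq\gf_{q^n}$, we have $\mathrm{rank}_{\gf_{q^n}}(\mathcal{M})=\mathrm{rank}_{\gf_q}(\mathcal{M})=\mathrm{rank}(L)$. Combining these equalities yields $\mathrm{rank}(D)=\mathrm{rank}(L)$, as claimed. I expect the main obstacle to be essentially notational, namely verifying the key identity with the modular indexing of $D$; once that identity is in place, the similarity $D=M\mathcal{M}M^{-1}$ and the resulting rank equality follow routinely from the invertibility of the Moore matrix and the field-extension invariance of rank.
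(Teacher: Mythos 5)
Your proof is correct, but there is nothing in the paper to compare it against: the paper states this lemma as a citation of Wu and Liu \cite[Proposition 4.4]{wu2013linearized} and gives no proof of its own. Your argument is a complete, self-contained, and standard proof of the cited fact: the key identity $\mathbf{y}(L(x))=D\,\mathbf{y}(x)$ is exactly the relation the paper itself uses later (Eq.~\eqref{lx} in the proof of Theorem~\ref{th1}), and your index bookkeeping $D_{ki}=a_{(i-k)\bmod n}^{q^k}$ does match the displayed form of the Dickson matrix, so your derivation is consistent with the paper's conventions. From there, the columnwise reading $DM=M\mathcal{M}$, hence $D=M\mathcal{M}M^{-1}$ with $M$ the Moore matrix of an $\gf_q$-basis, and the invariance of rank under the extension $\gf_q\subseteq\gf_{q^n}$, give $\mathrm{rank}(D)=\mathrm{rank}_{\gf_q}(\mathcal{M})=\mathrm{rank}(L)$ as claimed. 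The only ingredient you invoke without proof, the nonvanishing of the Moore determinant for an $\gf_q$-linearly independent tuple, is classical (Lidl--Niederreiter, Lemma 3.51) and reasonable to cite; what your approach buys, compared with merely quoting Wu--Liu, is that the similarity $D=M\mathcal{M}M^{-1}$ explains structurally why the Dickson matrix carries all the linear-algebraic data of $L$, which is also the conceptual reason behind the inverse formula in Lemma~\ref{linearinverse}.
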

We investigate a property of  a linearized polynomial $L(x)$ over $\gf_{q^n}$ with $rank(L)=n-1$  in the following result. 
\begin{lemma}\label{linearly}
	Let	$L(x)=\sum_{i=0}^{n-1}a_ix^{q^i}\in \gf_{q^n}[x]$  and $D$ be the associated Dickson matrix. If $\text{rank}(D)=n-1,$ then any $n-1$ row vectors of the matrix $D$ are linearly independent. 
\end{lemma}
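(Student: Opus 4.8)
The plan is to translate the statement into a question about the unique linear dependence among the rows of $D$ and then exploit the ``twisted cyclic'' symmetry built into every Dickson matrix. Write $R_0, R_1, \ldots, R_{n-1}$ for the rows of $D$, so that the $j$-th entry of $R_i$ is $a_{(j-i) \bmod n}^{q^i}$. Since $\mathrm{rank}(D) = n-1$, the $n$ rows span an $(n-1)$-dimensional space, so the space of relations $\{(\mu_0,\ldots,\mu_{n-1}) : \sum_i \mu_i R_i = 0\}$ is exactly one-dimensional; fix a nonzero generator $(\lambda_0, \ldots, \lambda_{n-1})$. I would first record the elementary reduction: a set of $n-1$ rows is linearly dependent if and only if the omitted index $k$ satisfies $\lambda_k = 0$. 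Indeed, if $\lambda_k = 0$ the relation restricts to a nontrivial dependence on the remaining rows, while if every $\lambda_i \neq 0$, any dependence among the rows other than $R_k$ extends (by setting the $k$-th coefficient to $0$) to a global relation, hence is a scalar multiple of $(\lambda_i)$ whose $k$-th entry vanishes, forcing it to be trivial. Thus the lemma is equivalent to the claim that every $\lambda_i$ is nonzero.

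The core of the argument is the recursion $R_{i+1} = \sigma(\phi(R_i))$, where $\phi$ raises each coordinate to the $q$-th power and $\sigma$ is the cyclic right shift of coordinates; this is immediate from the displayed form of $D$, and $(\sigma\phi)^n$ is the identity because $\phi^n$ acts trivially on $\gf_{q^n}$ and $\sigma^n$ is the identity shift. Applying the additive map $\sigma\phi$ to the relation $\sum_i \lambda_i R_i = 0$ and using the semilinearity $\sigma\phi(\lambda_i R_i) = \lambda_i^q\, \sigma\phi(R_i)$, I obtain
\[
0 = \sum_{i=0}^{n-1}\lambda_i^q R_{i+1} = \sum_{j=0}^{n-1}\lambda_{j-1}^q R_j,
\]
indices taken modulo $n$. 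This is again a relation among the rows, so by one-dimensionality there is a scalar $c \in \gf_{q^n}$ with $\lambda_{j-1}^q = c\,\lambda_j$ for every $j$.

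It then remains to deduce that no $\lambda_j$ vanishes. If $c = 0$ then every $\lambda_{j-1}^q = 0$, forcing the generator to be zero, which is impossible; hence $c \neq 0$ and $\lambda_j = c^{-1}\lambda_{j-1}^q$. Running this recurrence around the cycle $0 \to 1 \to \cdots \to n-1 \to 0$, a single vanishing coefficient would propagate to all of them, again contradicting nontriviality. Therefore every $\lambda_i$ is nonzero, and by the reduction above any $n-1$ rows of $D$ are linearly independent.

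I expect the main subtlety to be the bookkeeping around the Frobenius twist: the shift $\sigma$ is $\gf_{q^n}$-linear but $\phi$ is only $\gf_q$-linear, so the coefficients of the transported relation are $\lambda_{j-1}^q$ rather than $\lambda_{j-1}$, and it is precisely this $q$-power twist (together with the cyclic wraparound $R_n = R_0$) that makes the final propagation argument close up. Verifying that the relation space is genuinely one-dimensional --- rather than merely nontrivial --- is equally essential, since the whole reduction rests on it.
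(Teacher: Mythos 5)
Your proof is correct and rests on the same key mechanism as the paper's: applying the entrywise $q$-power Frobenius together with the cyclic shift (under which row $R_i$ of the Dickson matrix maps to $R_{i+1}$, with $R_n=R_0$) to transport a linear relation among the rows. The paper runs this as a contradiction, propagating dependence of one set of $n-1$ rows to every such set and hence contradicting $\mathrm{rank}(D)=n-1$, whereas you propagate vanishing of coordinates of the generator of the one-dimensional relation space through the twisted recurrence $\lambda_{j-1}^q=c\,\lambda_j$ --- the same argument in a cleaner, dual packaging.
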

\begin{proof}
	Suppose, on the contrary,  that there exist $n-1$ row vectors being linear dependent. In fact, we can assume that the first n-1 rows vectors of matrix D are linearly dependent, and other cases can be similarly proved. Suppose that the  vectors $(a_0, a_1, \cdots, a_{n-1}),$  $(a_{n-1}^{q}, a_{0}^{q},\cdots, a_{n-2}^{q})$ ,$\cdots$,     $(a_{2}^{q^{n-2}}, a_{3}^{q^{n-2}},\cdots, a_{1}^{q^{n-2}})$ are linear dependent, and so we get a relation  
	\begin{equation}\label{a12}
		k_1\left({\begin{array}{c}a_0\\ a_1\\ \vdots\\ a_{n-1}\end{array}}\right)^T+k_2\left({\begin{array}{c}a_{n-1}^{q}\\ a_{0}^{q}\\ \vdots\\ a_{n-2}^{q}\end{array}}\right)^T+\cdots+k_n\left({\begin{array}{c}a_{2}^{q^{n-2}}\\ a_{3}^{q^{n-2}}\\ \vdots\\ a_{1}^{q^{n-2}}\end{array}}\right)^T=\left({\begin{array}{c}0\\ 0\\ \vdots\\ 0\end{array}}\right)^T
	\end{equation}
	with $k_i \in \gf_{q^n}$ not all being $0.$ 
	Raising Eq.(\ref{a12}) to the power of $q,$ and then rearranging the row vectors components suitably,  
	we obtain  
	\begin{equation}\label{a13}
		k_1^q\left({\begin{array}{c}
				a_{n-1}^{q}\\ a_{0}^{q}\\ \vdots \\ a_{n-2}^{q}
		\end{array}}\right)^T+k_2^q\left({\begin{array}{c}
				a_{n-2}^{q^2}\\ a_{n-1}^{q^2}\\ \cdots\\a_{n-3}^{q^3}
		\end{array}}\right)^T
		+\cdots+k_n^q\left({\begin{array}{c}
				a_{1}^{q^{n-1}}\\ a_{2}^{q^{n-1}}\\ \vdots\\ a_{0}^{q^{n-1}}
		\end{array}}\right)^T=\left({\begin{array}{c}0\\ 0\\ \vdots\\ 0\end{array}}\right)^T,
	\end{equation}
	which means  except for the first row vector of matrix $D$,  the last n-1 rows vectors of matrix D are linearly dependent. 
	Similarly,  raising Eq.(\ref{a12}) to the   $q^2$-th power and then properly permuting the positions of the row vectors components, we obtain 
	\begin{equation*}
		k_1^{q^2}\left({\begin{array}{c}
				a_{n-2}^{q^2}\\ a_{n-1}^{q^2}\\ \cdots\\a_{n-3}^{q^3}
		\end{array}}\right)^T
		+\cdots+k_{n-1}^{q^2}\left({\begin{array}{c}
				a_{1}^{q^{n-1}}\\ a_{2}^{q^{n-1}}\\ \vdots\\ a_{0}^{q^{n-1}}
		\end{array}}\right)^T+k_{n}^{q^2}\left({\begin{array}{c}
				a_0\\ a_1\\ \vdots\\ a_{n-1}
		\end{array}}\right)^T=\left({\begin{array}{c}0\\ 0\\ \vdots\\ 0\end{array}}\right)^T.
	\end{equation*}
	That is,  except for the vector in the second row , the remaining $ n-1$ row vectors of matrix $D$ are linearly dependent. 
	We keep repeating the steps by  raising Eq.(\ref{a13}) to the power $q^3, \cdots, q^{n-1}$ and then properly permuting  the positions of row vectors components. Then we have that any $n-1$ row vectors of the matrix $D$ are linearly dependent, which is a contraction to $\text{rank}(D)=n-1.$
	We are done. 		
\end{proof}

For  be a prime power $q$ and a positive integer $n>1$, let 
$b \in \gf_{q^n}$ with $b^{q^{n-1}+q^{n-2}+...+q^{2}+q+1}=1$ and the polynomial
$$	A(x)=x^{q^{n-1}}+bx^{q^{n-2}}+b^{1+q^{n-1}}x^{q^{n-3}}+...+
b^{1+\sum_{i=1}^{j}q^{n-i}}x^{q^{n-j-2}}+...+b^{1+\sum_{i=1}^{n-2}q^{n-i}}x.$$
We demonstrate  that ${\rm Im}(A(x))$ is a  vector space with dimension $1$.  Furthermore, we present  a necessary and sufficient condition for  ${\rm Im}(A(x))^m$ to  also be a  vector space with dimension $1,$ as shown in the following lemma.
\begin{lemma}\label{a} \cite[Lemma 2]{wu2024permutation}
	Let the notation $A(x)$ be defined as above, then
	
	(i) $A(x)^q=b^qA(x);$
	
	(ii) ${\rm Im}(A(x)) =\{c g^{qt} \mid c \in \gf_q\}$  is a one-dimensional vector space over  $\gf_{q}$, where $g\in \gf_{q^n}$ is a primitive element of $\gf_{q^n}$ and $t$ is a positive integer with $b=g^{(q-1)t}.$
	
	(iii)  For a positive integer $m,$ $ {\rm Im}(A^m(x))$ is a one-dimensional vector space over  $\gf_{q}$ if and only if
	$\gcd(m, q-1)=1.$
\end{lemma}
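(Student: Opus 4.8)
The plan is to handle the three parts in order, using part (i) as the engine for (ii) and (iii). For part (i), I would write $A(x)=\sum_{k=0}^{n-1}c_k x^{q^{n-1-k}}$ with $c_0=1$ and $c_k=b^{1+\sum_{i=1}^{k-1}q^{n-i}}$ for $k\ge 1$, and apply the Frobenius $q$-th power termwise, so that $A(x)^q=\sum_{k=0}^{n-1}c_k^q x^{q^{n-k}}$. Since everything is taken modulo $x^{q^n}-x$, the top term $c_0^q x^{q^n}$ collapses to $x$, producing a wrap-around contribution to the coefficient of $x$. Comparing $A(x)^q$ with $b^q A(x)$ coefficient-by-coefficient in the powers $x^{q^{n-1-j}}$ reduces the identity to two families of scalar relations: $c_k^q=b^q c_{k-1}$ for $1\le k\le n-1$, and the single wrap-around relation $b^q c_{n-1}=1$. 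Each relation of the first family is checked by reducing the $b$-exponent modulo $q^n-1$ (using $b^{q^n}=b$), where $q e_k\equiv q+e_{k-1}$ after replacing $q^n$ by $1$; the wrap-around relation is exactly where the hypothesis $b^{1+q+\cdots+q^{n-1}}=1$ enters, since $q+e_{n-1}=1+q+\cdots+q^{n-1}$.

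For part (ii), I would exploit (i) directly. Fixing $x\in\gf_{q^n}$ and setting $w=A(x)$, part (i) gives $w^q=b^q w$, so whenever $w\neq 0$ we obtain $w^{q-1}=b^q=(g^{qt})^{q-1}$, whence $(w/g^{qt})^{q-1}=1$ and $w/g^{qt}\in\gf_q^*$. This shows ${\rm Im}(A(x))\subseteq\{c g^{qt}\mid c\in\gf_q\}$. Because $A(x)$ is a nonzero linearized polynomial (its leading coefficient is $1$), its image is a nonzero $\gf_q$-subspace of $\gf_{q^n}$; being contained in the one-dimensional space $\{c g^{qt}\mid c\in\gf_q\}$, it must equal it. I would also remark that the norm condition, written as $b^{(q^n-1)/(q-1)}=1$, forces $b$ into the unique subgroup of order $(q^n-1)/(q-1)$ of the cyclic group $\gf_{q^n}^*$, which is precisely the set of $(q-1)$-th powers, so an exponent $t$ with $b=g^{(q-1)t}$ genuinely exists.

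For part (iii), I would compute, using (ii), that ${\rm Im}(A^m(x))=\{(cg^{qt})^m\mid c\in\gf_q\}=g^{qtm}\cdot\{c^m\mid c\in\gf_q\}$. The set $\{c^m\mid c\in\gf_q\}$ consists of $0$ together with the image of the $m$-th power map on $\gf_q^*$, a subgroup of index $\gcd(m,q-1)$, so it has cardinality $(q-1)/\gcd(m,q-1)+1$. A one-dimensional $\gf_q$-vector space has exactly $q$ elements, so ${\rm Im}(A^m(x))$ can be one such space only if $(q-1)/\gcd(m,q-1)+1=q$, that is, $\gcd(m,q-1)=1$. Conversely, when $\gcd(m,q-1)=1$ the map $c\mapsto c^m$ is a bijection of $\gf_q$, hence $\{c^m\mid c\in\gf_q\}=\gf_q$ and ${\rm Im}(A^m(x))=g^{qtm}\gf_q$ is genuinely one-dimensional.

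The step I expect to be the main obstacle is the bookkeeping in part (i): the reindexing forced by Frobenius, the reduction $x^{q^n}=x$ that generates the wrap-around term, and the reduction of the $b$-exponents modulo $q^n-1$ all have to be carried out simultaneously and consistently. Once the scalar identities are isolated, the observation that they all collapse onto the single norm relation $b^{1+q+\cdots+q^{n-1}}=1$ makes the verification routine. Parts (ii) and (iii) are then short structural consequences — (ii) says $A$ lands in a Frobenius eigenset, and (iii) is a cardinality count combined with the surjectivity criterion for the $m$-th power map on $\gf_q$.
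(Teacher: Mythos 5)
Your proof is correct in all three parts: the coefficientwise verification of $A(x)^q=b^qA(x)$ (with the wrap-around term $x^{q^n}\equiv x$ absorbing the norm condition $b^{1+q+\cdots+q^{n-1}}=1$), the deduction of (ii) from the eigenvalue relation $w^{q-1}=b^q=(g^{qt})^{q-1}$ together with the fact that the image of a nonzero linearized polynomial is a nonzero $\gf_q$-subspace, and the treatment of (iii) via the cardinality of the image of the $m$-th power map on $\gf_q$. Note, however, that this paper contains no proof of this lemma to compare against: it is quoted as \cite[Lemma 2]{wu2024permutation}, so the statement is imported, not established here. Your argument is the natural one for such a claim, and it even adds a point the statement silently assumes, namely that the norm condition forces $b$ to be a $(q-1)$-th power so that the integer $t$ with $b=g^{(q-1)t}$ actually exists.
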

%
%
%
%
\begin{theorem}\label{th1}
	For  a prime power  $q$ and   positive integers $n>1$ and $m$, let $b \in \gf_{q^n}$ with $b^{q^{n-1}+q^{n-2}+...+q^{2}+q+1}=1$ and the polynomial
	$$	A(x)=x^{q^{n-1}}+bx^{q^{n-2}}+b^{1+q^{n-1}}x^{q^{n-3}}+...+
	b^{1+\sum_{i=1}^{j}q^{n-i}}x^{q^{n-j-2}}+...+b^{1+\sum_{i=1}^{n-2}q^{n-i}}x.$$	
	Assume that  $L(x)=\sum_{i=0}^{n-1}a_ix^{q^i}$ is a linearized polynomial over $\gf_{q^n}$. If $L(x)$ is not a permutation polynomial over  $\gf_{q^n}$, then the polynomial 
	$$f(x)=A^m(x)+L(x)$$
	is a permutation polynomial over $\gf_{q^n}$ if and only if  $\gcd(m, q-1)=1$, $ \text{rank}(D)=n-1,$ $\alpha_1+\alpha_2b^{qm}+\alpha_3b^{q^2m+qm}+\cdots+\alpha_nb^{m\sum_{i=1}^{n-1}q^i}\neq0$ and  the determinant of the matrix

	\begin{equation*}
		B=\left(
		\begin{array}{cccccc}
			b^{1+\sum_{i=1}^{n-2}q^{n-i}}&	a_0&a_{n-1}^q&a_{n-2}^{q^2}&\cdots&a_{2}^{q^{n-2}}\\
			b^{1+\sum_{i=1}^{n-3}q^{n-i}}&	a_1&	a_{0}^{q}&a_{n-1}^{q^2}&\cdots&	a_{3}^{q^{n-2}}\\
			\vdots&\vdots&\vdots&\vdots&\vdots&\vdots\\	
			b&	a_{n-2}& 	a_{n-3}^{q}&	a_{n-4}^{q^{2}}&\cdots& 	a_{0}^{q^{n-2}}\\
			1&	a_{n-1}& 	a_{n-2}^{q}&	a_{n-3}^{q^{2}}&\cdots& 	a_{1}^{q^{n-2}}\\
		\end{array}
		\right)
	\end{equation*} 
	is non-zero, where the matrix $D$ is the associate Dickson matrix of $L(x), $ and $(\alpha_1, \alpha_2, \cdots, \alpha_n)$ is a non-zero solution of the system of linear equations $D^Tx=O.$
\end{theorem}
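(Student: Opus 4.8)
The plan is to turn the permutation property of $f(x)=A^m(x)+L(x)$ into a statement about two $\gf_q$-linear maps and then read off the four conditions one at a time. I would first record the structural facts that drive everything. By Lemma \ref{a}, $\mathrm{Im}(A)=\gf_q\,g^{qt}$ is a line, and writing $\tilde\phi(x):=A(x)g^{-qt}\in\gf_q$ (a $\gf_q$-linear functional with $\ker\tilde\phi=\ker A$) we have $A(x)^m=\tilde\phi(x)^m w$ with $w:=g^{qtm}$; in particular $A^m(x)\in\gf_q w$ for every $x$, regardless of $m$. By Lemma \ref{DL} and the hypothesis that $L$ is not a permutation, $\mathrm{rank}(L)=\mathrm{rank}(D)\le n-1$. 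Since $\mathrm{Im}(f)\subseteq\gf_q w+\mathrm{Im}(L)$, a dimension count forces $\mathrm{rank}(L)=n-1$ (otherwise the right-hand side has $\gf_q$-dimension $<n$ and $f$ is not surjective), which is the condition $\mathrm{rank}(D)=n-1$.

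Next I would attach to $L$ the functional coming from $D^Tx=O$. Setting $\ell(x)=\sum_{j=0}^{n-1}\alpha_{j+1}x^{q^j}$, a coefficient comparison shows that $D^T(\alpha_1,\dots,\alpha_n)^T=O$ is exactly the statement $\ell\circ L\equiv 0$, so $\mathrm{Im}(L)\subseteq\ker\ell$; as $\dim\mathrm{Im}(L)=n-1$ this is an equality, $\ker\ell=\mathrm{Im}(L)$, and $\mathrm{Im}(\ell)$ is one-dimensional over $\gf_q$. The decisive computation is to evaluate $\ell(w)$: using $b=g^{(q-1)t}$ one checks $g^{q^{j+1}tm}=g^{qtm}\,b^{\,m\sum_{i=1}^{j}q^{i}}$, whence
\[
\ell(w)=g^{qtm}\Big(\alpha_1+\alpha_2 b^{qm}+\alpha_3 b^{q^2m+qm}+\cdots+\alpha_n b^{\,m\sum_{i=1}^{n-1}q^{i}}\Big),
\]
so the third condition is precisely $\ell(w)\neq0$, i.e. $w\notin\mathrm{Im}(L)$. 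If $\ell(w)=0$ then $\gf_q w\subseteq\mathrm{Im}(L)$ and $f$ maps into the hyperplane $\mathrm{Im}(L)$, so this condition is forced by surjectivity.

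With $\ker\ell=\mathrm{Im}(L)$ I would solve $f(x)=\beta$ directly, in the spirit of Lemma \ref{leff-}. Applying $\ell$ gives $\ell(\beta)=\tilde\phi(x)^m\ell(w)$; since a ratio of two elements of the line $\mathrm{Im}(\ell)$ lies in $\gf_q$, this determines $\mu:=\tilde\phi(x)^m=\ell(\beta)\ell(w)^{-1}\in\gf_q$ uniquely. Surjectivity of $f$ makes $c\mapsto c^m$ onto $\gf_q$, i.e. $\gcd(m,q-1)=1$, and conversely this coprimality recovers $\tilde\phi(x)=\mu^{1/m}=:\nu$ uniquely. The problem thus reduces to the \emph{linear} system $L(x)=\beta-\mu w$, $A(x)=\nu g^{qt}$, which is solvable (the first right-hand side lies in $\mathrm{Im}(L)$ by the choice of $\mu$) and has a unique solution exactly when $x\mapsto(L(x),A(x))$ is injective, i.e. $\ker L\cap\ker A=\{0\}$. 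For the converse direction, if $0\neq x_0\in\ker L\cap\ker A$ then $A(x+x_0)=A(x)$ and $L(x+x_0)=L(x)$ give $f(x+x_0)=f(x)$, contradicting injectivity; thus every condition is genuinely necessary, and finiteness of $\gf_{q^n}$ lets me pass between injectivity, surjectivity and bijectivity freely.

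The last and most delicate step is to match $\ker L\cap\ker A=\{0\}$ with $\det B\neq0$. I would use the Dickson dictionary: $x\in\ker L$ corresponds to $(x,x^q,\dots,x^{q^{n-1}})^T\in\ker D$, and the homogeneous system cut out by the first $n-1$ rows of $D$ together with the coefficient row $(A_0,\dots,A_{n-1})$ of $A$ has coefficient matrix equal to $B^{T}$ up to a row permutation, since the first column of $B$ is the coefficient vector of $A$ and its remaining columns are the first $n-1$ rows of $D$. Here Lemma \ref{linearly} is essential: because $\mathrm{rank}(D)=n-1$ makes any $n-1$ rows of $D$ independent, the first $n-1$ rows already define the one-dimensional space $\ker D=\gf_{q^n}(x_0,\dots,x_0^{q^{n-1}})^T$, where $x_0$ generates $\ker L$; the extra $A$-row then annihilates this line (forcing $\det B=0$) if and only if $A(x_0)=0$, i.e. if and only if $\ker L\cap\ker A\neq\{0\}$. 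I expect this bookkeeping—identifying $B$ with the reduced Dickson system and invoking Lemma \ref{linearly} to promote ``$n-1$ independent rows'' to control of the full kernel—to be the main obstacle, while the existence-and-uniqueness counting in the reduction is routine.
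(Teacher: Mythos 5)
Your proposal is correct, and while it shares the paper's skeleton, it resolves the two hard conditions by a genuinely different mechanism. The common part: both arguments obtain necessity of $\mathrm{rank}(D)=n-1$ (and of $\gcd(m,q-1)=1$) from cardinality/dimension counts on $\mathrm{Im}(f)\subseteq \mathrm{Im}(A^m)+\mathrm{Im}(L)$, both introduce the functional $\ell(x)=\sum_{j}\alpha_{j+1}x^{q^j}$ built from a nonzero solution of $D^Tx=O$ (this is exactly the paper's $\psi_1$, and your identity $\ell\circ L\equiv 0$ is the paper's equation $(\alpha_1,\dots,\alpha_n)D=(0,\dots,0)$), and both lean on Lemma \ref{linearly}. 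The divergence is in how $\det B$ enters. You prove the equivalence $\det B\neq 0 \iff \ker L\cap\ker A=\{0\}$, using that $\ker D=\gf_{q^n}\,(x_0,x_0^q,\dots,x_0^{q^{n-1}})^T$ for a generator $x_0$ of $\ker L$ and that, by Lemma \ref{linearly}, the first $n-1$ rows of $D$ (which are the last $n-1$ columns of $B$) already cut out this line, so the extra $A$-row annihilates it iff $A(x_0)=0$; sufficiency is then an abstract unique-solvability argument, reducing $f(x)=\beta$ to the linear system $L(x)=\beta-\mu w$, $A(x)=\nu g^{qt}$ and counting dimensions for $x\mapsto(L(x),A(x))$. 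The paper instead handles $\det B=0$ by writing the first column of $B$ as a combination of the others, which translates into $A(x)=k_2L(x)+k_3L^q(x)+\cdots+k_nL^{q^{n-2}}(x)$ and hence into the factorization $f=\left((k_2x+\cdots+k_nx^{q^{n-2}})^m+x\right)\circ L$, killing surjectivity; and when $\det B\neq 0$ it solves $Bx=(1,0,\dots,0)^T$ and feeds the solution $(\beta_1,\dots,\beta_n)$ into the local method (Lemma \ref{leff-}) to exhibit $x$ explicitly as a polynomial expression in $\varphi_1=A(x)$ and $\varphi_2=f(x)$. What each approach buys: yours is more conceptual and makes the four conditions transparent (the third condition becomes $w\notin\mathrm{Im}(L)$, and $\det B\neq 0$ becomes the transversality $\ker L\cap\ker A=\{0\}$), but it is non-constructive; the paper's bookkeeping is heavier, but it simultaneously certifies the permutation property and produces the closed-form compositional inverse $f^{-1}$, which is the paper's stated goal and does not fall out of your argument.
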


\begin{proof}
	We first prove the necessity. Put  $ \text{rank}(D)=r.$ Since $L(x)$ is not a permutation polynomial over $\gf_{q^n}$, we have $ \text{rank}(D)\leq n-1 $ by Lemma \ref{DL}.
	
	If $\gcd(m, q-1)\neq1$ or $ \text{rank}(D)=r< n-1, $ then it follows from   lemma \ref{a} that 
	\begin{equation}
		\sharp {\rm Im}(f(x))\leqq 	\sharp {\rm Im}(A^m(x))\cdot q^r\leqq q \cdot q^r< q^n,
	\end{equation}
	and so $f(x)$ is not a permutation polynomial over $\gf_{q^n}.$
	
	Now we assume that $ \text{rank}(D)= n-1, $ and $\gcd(m, q-1)=1.$ 
	Suppose that the determinant of $B$ is zero.  it follows that n 
	column vectors of $B$ are linearly dependent. Additionally, since  the rank of $D$ is $n-1,$ by Lemma \ref{linearly} we have that the first $n-1$ row vectors of $D$ are linear independent. 
	Furthermore, since the transpose  of   the last $n-1$ column vectors  of the matrix $B$ form the first $n-1$ row vectors of the matrix $D$, these last $n-1$ column vectors  of the matrix $B$ are linear independent.
	Given that the determinant of $B$ is zero, it follows that the first column vector of $B$ can be expressed as a linear combination of the last $n-1$ column vectors  of the matrix $B.$  Therefore,   there exist scalars $ k_2, \cdots, k_n \in \gf_{q^n}$ not all zero, such that 
	\begin{equation*}\label{ab}
		\left({\begin{array}{c}
				b^{1+\sum_{i=1}^{n-2}q^{n-i}}\\ b^{1+\sum_{i=1}^{n-3}q^{n-i}}\\
				\vdots \\
				1 \end{array}}\right)=
		k_2\left({\begin{array}{c}
				a_0\\ a_1\\ \vdots\\ a_{n-1} \end{array}}\right)	 			
		+\cdots +k_n\left({\begin{array}{c}a_{2}^{q^{n-2}}\\	a_{3}^{q^{n-2}}\\ \vdots\\ a_{1}^{q^{n-2}}\end{array}}\right).
	\end{equation*}
	This implies 
	\begin{align}\label{ab}
		\nonumber
		&\,	\left({\begin{array}{c}
				b^{1+\sum_{i=1}^{n-2}q^{n-i}}\\ b^{1+\sum_{i=1}^{n-3}q^{n-i}}\\
				\vdots \\
				1 \end{array}}\right)^T\left({\begin{array}{c}
				x\\x^q\\
				\vdots \\
				x^{q^{n-1}} \end{array}}\right)\\\nonumber
		= &\,  \left(	k_2\left({\begin{array}{c}
				a_0\\ a_1\\ \vdots\\ a_{n-1} \end{array}}\right)^T	 			
		+\cdots +k_n\left({\begin{array}{c}a_{2}^{q^{n-2}}\\	a_{3}^{q^{n-2}}\\ \vdots\\ a_{1}^{q^{n-2}}\end{array}}\right)^T\right)\left({\begin{array}{c}
				x\\x^q\\
				\vdots \\
				x^{q^{n-1}} \end{array}}\right)\\\nonumber
		=&\, k_2	\left({\begin{array}{c}
				a_0\\ a_1\\ \vdots\\ a_{n-1} \end{array}}\right)^T\left({\begin{array}{c}
				x\\x^q\\
				\vdots \\
				x^{q^{n-1}} \end{array}}\right)	 			
		+\cdots +k_n	\left({\begin{array}{c}a_{2}^{q^{n-2}}\\	a_{3}^{q^{n-2}}\\ \vdots\\ a_{1}^{q^{n-2}}\end{array}}\right)^T\left({\begin{array}{c}
				x\\x^q\\
				\vdots \\
				x^{q^{n-1}} \end{array}}\right) .\\
	\end{align}
	Note that 
	\begin{equation}\label{eqx1}
		\left({\begin{array}{cccc}
				b^{1+\sum_{i=1}^{n-2}q^{n-i}},& b^{1+\sum_{i=1}^{n-3}q^{n-i}},&
				\cdots, &
				1 \end{array}}\right)\left({\begin{array}{cccc}
				x, &x^q, &\cdots,&x^{q^{n-1}} \end{array}}\right)^T=A(x),
	\end{equation}
	and 
	\begin{equation}\label{lx}
		\left({\begin{array}{c}
				L(x)\\	L(x)^q\\
				\vdots \\
				L(x)^{q^{n-1}} \end{array}}\right) =\left(
		\begin{array}{cccc}
			a_0&a_1&\cdots&a_{n-1}\\
			a_{n-1}^{q}&	a_{0}^{q}&\cdots&	a_{n-2}^{q}\\
			\vdots&\vdots&\vdots&\vdots\\
			a_{1}^{q^{n-1}}& 	a_{2}^{q^{n-1}}&\cdots& 	a_{0}^{q^{n-1}}\\
		\end{array}
		\right) \left({\begin{array}{c}
				x\\x^q\\
				\vdots \\
				x^{q^{n-1}} \end{array}}\right)=D\left({\begin{array}{c}
				x\\x^q\\
				\vdots \\
				x^{q^{n-1}} \end{array}}\right),
	\end{equation}	
	or we can rewrite as 
	
	\begin{equation}\label{eqx2}
		\left({\begin{array}{cccccc}
				a_{n-j}^{q^j},& 
				\cdots,  &a_{n-1}^{q^j}, &a_{0}^{q^j}, &\cdots, &a_{n-j-1}^{q^j}
		\end{array}}\right)^T \left({\begin{array}{cccc}
				x,&x^q,&\cdots&x^{q^{n-1}} \end{array}}\right)=L(x)^{q^{j}},
	\end{equation}
	for $j=1, 2, \cdots, n. $
	
	Substituting  \eqref{eqx1} and \eqref{eqx2} into   \eqref{ab} yields 
	$$A(x)=k_2L(x)+k_3L^q(x)+k_4L^{q^2}(x)+\cdots+k_nL^{q^{n-2}}(x),$$
	Consequently, we can conclude that 
	\begin{equation}\label{f1}
		f(x)=A^m(x)+L(x)=\left((k_2x+k_3x^q+k_4x^{q^2}+\cdots+k_nx^{q^{n-2}})^m+x\right)\circ L(x).
	\end{equation}
	Since $L(x)$ is not a permutation polynomial over $\gf_{q^n},$  it follows from  (\ref{f1}) that $f(x)$ is not a permutation polynomial.  
	
	Now we assume that the martrix of $B$ is non-singular.  
	Since the rank of $D$ is $n-1,$ the system of homogeneous linear equations 
	\begin{equation}\label{dx}
		D^Tx=O,
	\end{equation}
	has a nontrivial solutions. Let $(\alpha_1, \alpha_2, \cdots, \alpha_n)^T$ be a non-zero solution of (\ref{dx}). Then we have 
	\begin{equation}\label{eqd}
		(\alpha_1, \alpha_2, \cdots, \alpha_n)D=(0, 0, \cdots, 0).
	\end{equation}
	Let $	\psi_1(x)=\alpha_1x+ \alpha_2x^q+\cdots+\alpha_nx^{q^{n-1}},$  $\psi_2(x)=x,$ $\varphi_1(x)=A(x)$ and $\varphi_2(x)=f(x).$ 
	It follows from  \eqref{lx} and \eqref{eqd} that  
	\begin{align}\label{amx}
		\nonumber	&\, \psi_1(x)\circ f(x)\\ \nonumber
		=&\, (\alpha_1, \alpha_2, \cdots, \alpha_n)\left({\begin{array}{c}
				x\\x^q\\
				\vdots \\
				x^{q^{n-1}} \end{array}}\right)\circ A^m(x)+(\alpha_1, \alpha_2, \cdots, \alpha_n)\left({\begin{array}{c}
				x\\x^q\\
				\vdots \\
				x^{q^{n-1}} \end{array}}\right)\circ L(x)\\ \nonumber 
		=&\, (\alpha_1, \alpha_2, \cdots, \alpha_n)\left({\begin{array}{c}
				A^m(x)\\A^{mq}(x)\\
				\vdots \\
				A^{mq^{n-1}}(x)\end{array}}\right)+ (\alpha_1, \alpha_2, \cdots, \alpha_n)\left({\begin{array}{c}
				L(x)\\L(x)^q\\
				\vdots \\
				L(x)^{q^{n-1}} \end{array}}\right)\\ \nonumber
		=&\,A^m(x)(\alpha_1, \alpha_2, \cdots, \alpha_n)\left({\begin{array}{c}
				1\\b^{qm}\\
				\vdots \\
				b^{m\sum_{i=1}^{n-1}q^i}\end{array}}\right)+(\alpha_1, \alpha_2, \cdots, \alpha_n)D\left({\begin{array}{c}
				x\\x^q\\
				\vdots \\
				x^{q^{n-1}} \end{array}}\right)\\ 
		=&\,(\alpha_1+\alpha_2b^{qm}+\alpha_3b^{q^2m+qm}+\cdots+\alpha_nb^{m\sum_{i=1}^{n-1}q^i})A^m(x).\\ \nonumber
	\end{align}
	If $\alpha_1+\alpha_2b^{qm}+\alpha_3b^{q^2m+qm}+\cdots+\alpha_nb^{m\sum_{i=1}^{n-1}q^i}=0,$ then $f(x)$ is not a permutation polynomial over $\gf_{q^n}.$
	
	Suppose that $\alpha_1+\alpha_2b^{qm}+\alpha_3b^{q^2m+qm}+\cdots+\alpha_nb^{m\sum_{i=1}^{n-1}q^i}\neq0.$  For simplicity, we put $s=\alpha_1+\alpha_2b^{qm}+\alpha_3b^{q^2m+qm}+\cdots+\alpha_nb^{m\sum_{i=1}^{n-1}q^i}.$
	Since  $\gcd(m, q-1)=1$, there there exist integers $u$ and $v$ such 
	that $mu\equiv1+v(q-1) \pmod{q^n-1},$ and so by (\ref{amx}), that we get 
	\begin{equation*}
		b^{-qv}\left(s^{-1}(\psi_1(x)\circ f(x))\right)^u=A^{mu}(x)=A^{1+v(q-1)}(x)=A(x)=\varphi_1(x).
	\end{equation*}
	Thus, we obtain 
	\begin{equation}\label{LLL}
		\varphi_2(x)-\varphi_1^m(x)=L(x).
	\end{equation}
	On the other hand, since the determinant of $B$ is not zero, the system of non-homogeneous linear equations 
	\begin{equation} \label{eqbax}
		Bx=(1, 0, \cdots , 0)^T, 
	\end{equation}
	has a unique solution. Let $(\beta_1, \beta_2, \cdots, \beta_n)^T$ be the unique solution of the system of non-homogeneous linear equations (\ref{eqbax}). Then we have 
	\begin{equation}\label{eqbax1}
		(\beta_1, \beta_2, \cdots, \beta_n)B^T=(1, 0, \cdots, 0).
	\end{equation}
	Put $D=(\eta_1, \eta_2, \cdots, \eta_n)$ and $\beta^T=(\beta_2, \beta_3, \cdots, \beta_n, 0)$, where $\eta_i$ is the $i$-th column vector of the matrix $D$,  
	It follows from Eq. \eqref{eqbax1} that 
	\begin{align*}
		\nonumber&\,	\beta_1(b^{1+\sum_{i=1}^{n-2}q^{n-i}}, b^{1+\sum_{i=1}^{n-3}q^{n-i}}, \cdots, 1)+(\beta_2, \beta_3, \cdots, \beta_n, 0)D\\\nonumber
		=&\, \beta_1(b^{1+\sum_{i=1}^{n-2}q^{n-i}}, b^{1+\sum_{i=1}^{n-3}q^{n-i}}, \cdots, 1)+\beta^T(\eta_1, \eta_2, \cdots, \eta_n)\\\nonumber
		=&\, (\beta_1b^{1+\sum_{i=1}^{n-2}q^{n-i}}+\beta^T\eta_1, \beta_1b^{1+\sum_{i=1}^{n-3}q^{n-i}}+\beta^T\eta_2, \cdots, \beta_1+\beta^T\eta_n,)\\
		=&\,(\beta_1, \beta_2, \cdots, \beta_n)B^T\\
		=&\, (1, 0, \cdots, 0),
	\end{align*}
	and so by \eqref{lx}, that 
	\begin{align*}
		&\beta_1\varphi_1(x)+\left((	\beta_2x+	\beta_3x^q+\cdots+	\beta_nx^{q^{n-2}})\circ \left(\varphi_2(x)-\varphi_1^m(x)\right )\right)\\
		=	&\,\beta_1\varphi_1(x)+\left((	\beta_2x+	\beta_3x^q+\cdots+	\beta_nx^{q^{n-2}})\circ L(x)\right)\\
		=&\, \beta_1(b^{1+\sum_{i=1}^{n-2}q^{n-i}}, b^{1+\sum_{i=1}^{n-3}q^{n-i}}, \cdots, 1)\left({\begin{array}{c}
				x\\x^q\\
				\vdots \\
				x^{q^{n-1}} \end{array}}\right)+(\beta_2, \beta_3, \cdots, \beta_n, 0)\left({\begin{array}{c}
				L(x)\\L(x)^q\\
				\vdots \\
				L(x)^{q^{n-1}} \end{array}}\right)\\	
		=&\, \beta_1(b^{1+\sum_{i=1}^{n-2}q^{n-i}}, b^{1+\sum_{i=1}^{n-3}q^{n-i}}, \cdots, 1)\left({\begin{array}{c}
				x\\x^q\\
				\vdots \\
				x^{q^{n-1}} \end{array}}\right)+(\beta_2, \beta_3, \cdots, \beta_n, 0)D\left({\begin{array}{c}
				x\\x^q\\
				\vdots \\
				x^{q^{n-1}} \end{array}}\right)\\
		=&\,x.\\
	\end{align*}
	It follows from Lemma (\ref{leff-}) that the polynomial $f(x)$ permutes $\gf_{q^n}$ and the compositional inverse of $f(x)$ over $\gf_{q^n}$ is 
	\begin{multline*}
		f^{-1}(x)= \beta_1b^{-qv}s^{-u}\left(\alpha_1x+ \alpha_2x^q+\cdots+\alpha_nx^{q^{n-1}}\right)^u+\\
		(	\beta_2x+	\beta_3x^q+\cdots+	\beta_nx^{q^{n-2}})\circ \left(x-b^{-qvm}s^{-um}(\alpha_1x+ \alpha_2x^q+\cdots+\alpha_nx^{q^{n-1}})^{um}\right).
	\end{multline*}
	This completes the proof. 
	
\end{proof}

\section*{Declarations}

\begin{itemize}
	\item
	The works is partially supported by the National Natural Science Foundation of China (Grant Nos. 12171163 and 11901083 ) and  the Guangdong Basic and Applied Basic Research Foundation (Grant Nos. 2020A1515111090 and 2024A1515010589).
\end{itemize}

\bibliography{sn-bibliography}

\end{document}